\begin{document}

\newcommand{\ci}[1]{_{ {}_{\scriptstyle #1}}}

\newcommand{\norm}[1]{\ensuremath{\left\|#1\right\|}}
\newcommand{\abs}[1]{\ensuremath{\left\vert#1\right\vert}}
\newcommand{\ip}[2]{\ensuremath{\left\langle#1,#2\right\rangle}}
\newcommand{\p}{\ensuremath{\partial}}
\newcommand{\pr}{\mathcal{P}}

\newcommand{\pbar}{\ensuremath{\bar{\partial}}}
\newcommand{\db}{\overline\partial}
\newcommand{\D}{\mathbb{D}}
\newcommand{\B}{\mathbb{B}}
\newcommand{\Sp}{\mathbb{S}}
\newcommand{\T}{\mathbb{T}}
\newcommand{\R}{\mathbb{R}}
\newcommand{\Z}{\mathbb{Z}}
\newcommand{\C}{\mathbb{C}}
\newcommand{\N}{\mathbb{N}}
\newcommand{\scrH}{\mathcal{H}}
\newcommand{\scrL}{\mathcal{L}}
\newcommand{\td}{\widetilde}

\newcommand{\La}{\langle }
\newcommand{\Ra}{\rangle }
\newcommand{\rk}{\operatorname{rk}}
\newcommand{\card}{\operatorname{card}}
\newcommand{\ran}{\operatorname{Ran}}
\newcommand{\osc}{\operatorname{OSC}}
\newcommand{\im}{\operatorname{Im}}
\newcommand{\re}{\operatorname{Re}}
\newcommand{\tr}{\operatorname{tr}}
\newcommand{\vf}{\varphi}
\newcommand{\f}[2]{\ensuremath{\frac{#1}{#2}}}

\newcommand{\kzp}{k_z}
\newcommand{\klp}{k_{\lambda_i}^{(p,\alpha)}}
\newcommand{\kz}{k_z}
\newcommand{\kl}{k_{\lambda_i}^{(\alpha)}}
\newcommand{\TTp}{\mathcal{T}_p}
\newcommand{\Cd}{\mathbb{C}^{d}}
\newcommand{\cd}{\mathbb{C}^{d}}
\newcommand{\Bn}{\mathbb{B}^{n}}
\newcommand{\Cn}{\mathbb{C}^{n}}
\newcommand{\Langle}{\left\langle}
\newcommand{\Rangle}{\right\rangle}
\newcommand{\lp}{\left(}
\newcommand{\rp}{\right)}
\newcommand{\dva}{dv_{\alpha}}
\newcommand{\ata}{A_{\alpha}^{2}}
\newcommand{\lpa}{L_{\alpha}^{p}}
\providecommand{\apa}{A_{\alpha}^{p}}
\providecommand{\K}[2]{K_{#1}}
\providecommand{\nk}[2]{k_{#1}}
\providecommand{\diam}{\textnormal{diam}}


\newcommand{\entrylabel}[1]{\mbox{#1}\hfill}

\newenvironment{entry}
{\begin{list}{X}%
  {\renewcommand{\makelabel}{\entrylabel}%
      \setlength{\labelwidth}{55pt}%
      \setlength{\leftmargin}{\labelwidth}
      \addtolength{\leftmargin}{\labelsep}%
   }%
}%
{\end{list}}


\numberwithin{equation}{section}

\newtheorem{thm}{Theorem}[section]
\newtheorem{lm}[thm]{Lemma}
\newtheorem{lem}[thm]{Lemma}
\newtheorem{cor}[thm]{Corollary}
\newtheorem{conj}[thm]{Conjecture}
\newtheorem{prob}[thm]{Problem}
\newtheorem{prop}[thm]{Proposition}
\newtheorem*{prop*}{Proposition}
\newtheorem{defn}[thm]{Definition}

\theoremstyle{remark}
\newtheorem{rem}[thm]{Remark}
\newtheorem*{rem*}{Remark}

\hyphenation{geo-me-tric}

\title[Compact Operators on Vector--Valued Bergman Space]
{compact operators on vector--valued bergman space via the 
berezin transform}

\author[R. Rahm]{Robert S. Rahm$^\dagger$}
\address{Robert S. Rahm, School of Mathematics\\ Georgia Institute of Technology
\\ 686 Cherry Street\\ Atlanta, GA USA 30332-0160}
\email{rrahm3@math.gatech.edu}

\subjclass[2000]{32A36, 32A, 47B05, 47B35}
\keywords{Bergman Space, Toeplitz operator, compact operator, Berezin transform}

\begin{abstract}
In this paper, we characterise compactness of finite sums of finite products of 
Toeplitz operators acting on the
\begin{math}\cd\end{math}-valued weighted Bergman Space, denoted 
\begin{math}A_{\alpha}^{p}(\Bn,\cd)\end{math}. The main result shows that a 
finite sum of finite product of Toeplitz
operators acting on \begin{math}A_{\alpha}^{p}(\Bn,\cd)\end{math} is compact if 
and only if its Berezin transform
vanishes on the boundary of the ball. 
\end{abstract}

\maketitle
\section{Introduction and Statement of Main Results}
\subsection{Definition of the Spaces $L_{\alpha}^{p}(\Bn;\Cd)$ and 
$A_{\alpha}^{p}(\Bn;\Cd)$}

Let \begin{math}\Bn\end{math} denote the open unit ball in 
\begin{math}\C^{n}\end{math}. For \begin{math}d\in\N
\end{math}, a function, \begin{math}f\end{math}, defined on 
\begin{math}\Bn\end{math} and taking values in \begin{math}\Cd\end{math}
(that is, \begin{math}f\end{math} is vector-valued)
is said to be measurable if 
\begin{math}z\mapsto \Langle f(z),e\Rangle_{\Cd}
\end{math} is measurable for every \begin{math}e\in\Cd \end{math}. Since
$\Cd$ is finite dimensional, this is the same as requiring that $f$ 
be measurable in each coordinate. For a vector, $v\in\Cd$, let 
$\norm{v}_{2}$ denote the usual euclidean norm on $\Cd$. That is
$\norm{v}_{2}^{2}:=\ip{v}{v}_{\Cd}$, where, of course, $\ip{}{}_{\Cd}$ is the
standard inner product on $\Cd$.
For \begin{math} \alpha > -1
\end{math}, let 
\begin{align*}
\dva (z) := c_{\alpha}(1-|z|^{2})^{\alpha}dV (z) 
\end{align*}
where $dV$ is volume measure on $\Bn$ and \begin{math}c_{\alpha} \end{math} is a
constant such that \begin{math}\int_{\Bn}\dva (z) = 1 \end{math}. Define 
\begin{math} L_{\alpha}^{2}(\Bn;\Cd)\end{math} to be the set of 
all measurable functions on \begin{math}\Bn\end{math} taking values in 
\begin{math}\Cd\end{math} such that 
\begin{align*}
\norm{f}_{L_{\alpha}^{2}(\Bn;\Cd)}^{2}
:=\int_{\Bn}\ip{f(z)}{f(z)}_{\Cd} \dva (z) 
=\int_{\Bn}\norm{f(z)}_{2}^{2}dv_{\alpha}(z) < \infty.
\end{align*}
It should be noted that \begin{math}L_{\alpha}^{2}(\Bn;\Cd)\end{math} is a 
Hilbert Space with inner product:
\begin{align*}
\ip{f}{g}_{L_{\alpha}^{2}(\Bn;\Cd)} := \int_{\Bn}\ip{f(z)}{g(z)}_{\Cd}\dva (z).
\end{align*}

\noindent It is obvious that this is an inner product and that 
\begin{math}\norm{f}_{L_{\alpha}^{2}(\Bn;\Cd)}^{2}
= \ip{f}{f}_{L_{\alpha}^{2}(\Bn;\Cd)}
\end{math}. 

Similarly, a function \begin{math}f:\Bn\to\Cd\end{math} is said to be 
holomorphic if \begin{math}z\mapsto\Langle f(z),e\Rangle_{\Cd} \end{math} is a 
holomorphic function for every \begin{math}e\in\C^{d}\end{math}. 
(Similarly, this 
is the same as requiring that $f$ be holomorphic in each coordinate.)
Define \begin{math}A_{\alpha}^{2}(\Bn;\Cd)\end{math} to be the 
set of holomorhic functions on $\Bn$ that are also in 
\begin{math}L_{\alpha}^{2}(\Bn;\Cd)\end{math}. Additionally, define 
\begin{math}\mathcal{L}(A_{\alpha}^{2}(\Bn;\Cd))\end{math} be the set of 
bounded linear operators from $A_{\alpha}^{2}(\Bn;\Cd)$ to 
$A_{\alpha}^{2}(\Bn;\Cd)$.

\subsection{Background for the Scalar--Valued Case}
For the moment, let \begin{math}d=1\end{math}.
Recall the reproducing kernel:
\begin{align*}
K_{z}^{(\alpha)}(w)=K^{(\alpha)}(z,w)
:=\frac{1}{(1-\overline{z}w)^{n+1+\alpha}}.
\end{align*} 

\noindent That is, if \begin{math}f\in A_{\alpha}^{2}(\Bn;\C)\end{math} 
there holds: 
\begin{align*}
f(z) = \ip{f}{\K{z}{2}}_{A_{\alpha}^{2}(\Bn;\C)}=
\int_{\Bn}\frac{f(w)}{(1-z\overline{w})^{n+1+\alpha}}\dva (w).
\end{align*}

\noindent Recall also the normalized reproducing kernels, $k_{z}^{(2,\alpha)}$,
normalized so that 
\begin{math}\norm{k_{z}^{(2,\alpha)}}_{L_{\alpha}^{2}(\Bn;\C)}=1\end{math}.
A simple calculation shows:
\begin{align*}
k_{z}^{(2,\alpha)}(w)=k^{(2,\alpha)}(z,w)
:=\norm{K_z^{(\alpha)}}^{-1}_{L_{\alpha}^{2}(\Bn;\C)}K_{z}^{(\alpha)}(w)=
\frac{(1-|z|^{2})^{\frac{n+1+\alpha}{2}}}
{(1-\overline{z}w)^{n+1+\alpha}}.
\end{align*} 

The reproducing kernels allow us to explicitly write the orthogonal 
projection, $P_{\alpha}$, from \begin{math}L_{2}^{\alpha}(\Bn;\C) \end{math} to 
\begin{math}A_{\alpha}^{2}(\Bn;\C) \end{math}:
\begin{align*}
(P_{\alpha}f)(z)=\ip{f}{K_{z}^{(\alpha)}}_{L_{\alpha}^{2}(\Bn;\C)}.
\end{align*}
Let \begin{math}\varphi\in L^{\infty}(\Bn)\end{math}. The Toeplitz operator with 
symbol \begin{math}\varphi\end{math} is defined to be:
\begin{align*}
T_{\varphi}:=P_{\alpha}M_{\varphi},
\end{align*}
where \begin{math}M_{\varphi}\end{math} is the multiplication operator. So there
holds: \begin{math}(T_{\varphi}f)(z) = 
\ip{\varphi f}{K_{z}^{(\alpha)}}_{L_{\alpha}^{2}(\Bn;\C)}\end{math}. 
Recall that the 
Berezin Transform of \begin{math}T\end{math}, denoted 
\begin{math}\widetilde{T} \end{math}, is a function on \begin{math}\Bn\end{math}
defined by the formula: \begin{math} \widetilde{T}(\lambda) = 
\ip{T k_{\lambda}^{(2,\alpha)}}
{k_{\lambda}^{(2,\alpha)}}_{A_{\alpha}^{2}(\Bn;\C)} \end{math}.

\subsection{Generalization to Vector--Valued Case}
Now, we consider \begin{math}d\in\N\end{math} and $d>1$.
The preceding discussion can be carried over with only a few modifications. 
First, the reproducing kernels remain the same, but the function 
\begin{math}f\end{math} is now \begin{math}\Cd\end{math}--valued and the 
integrals must be interpreted as a vector--valued integrals (that is, integrate 
in each coordinate). To make this more precise, if \begin{math}f\end{math} is
a \begin{math}\Cd\end{math}-valued function on \begin{math}\Bn\end{math}, and
$\{e_k\}_{k=1}^{d}$ is the standard orthonormal basis for $\Cd$,  
define:
\begin{align*}
\int_{\Bn}f(z)\dva (z) &:= \sum_{k=1}^{d} \int_{\Bn} 
\ip{f(z)}{e_{k}}_{\Cd}\dva (z)e_{k}.
\end{align*}

Let $L_{M^{d}}^{\infty}$ denote the set of $d\times d$ matrix--valued functions
such that the function \begin{math}z\mapsto\norm{\varphi(z)}\end{math} is an 
\begin{math}L^{\infty} \end{math} function. Note that it is not particularly 
important which matrix norm is used -- since \begin{math}\Cd\end{math} is finite
dimensional all norms are equivalent. The second change is that our symbols are 
now matrix--valued functions in $L_{M^{d}}^{\infty}$.

Define the Toeplitz algebra, denoted by $\mathcal{T}_{p,\alpha}$,
to be the SOT closure of finite sums of finite products of Toeplitz operators 
with $L_{M^{d}}^{\infty}$ symbols.

Finally, we change the way that we define the Berezin transform of an operator. 
The Berezin transform will 
be a matrix--valued function, acting on $\Cd$, given by the following relation 
(see also \cite{AE}):

\begin{align}
\ip{\widetilde{T}(z)e}{h}_{\Cd}=\ip{T(k_{z} e)}{k_{z} h}_{A_{\alpha}^{2}}
\end{align}
for \begin{math}e,h\in\Cd \end{math}.

We are now ready to state the main theorem of the paper, but first 
we need to introduce an auxiliary operator.
\begin{defn}
Let \begin{math}U_{z}\end{math} be defined by: 
\begin{math}(U_{z}f)=(f\circ \varphi_{z})  k_{z}^{(2,\alpha)}\end{math}. 
That is,  
\begin{align*}
(U_{z}f)(w)=(f\circ\varphi_{z})(w) k_{z}^{(2,\alpha)}(w).
\end{align*}
\end{defn}

\begin{defn} For \begin{math}T\in\mathcal{L}(\lpa)\end{math} set
\begin{math}T_{z}=U_{z}TU_{z}\end{math}.
\end{defn}

\begin{thm}\label{thm:main0}
Let \begin{math}T\end{math} be an operator in 
\begin{math}\mathcal{L}(A_{\alpha}^{2})\end{math} which can be 
written as 
\begin{align*}
T=\sum_{j=1}^{m}\prod_{k=1}^{m_{j}}T_{u_{j,k}},
\end{align*} 
where 
\begin{math}u_{j,k}\in
L_{M^{d}}^{\infty}\end{math}. Then the following are equivalent: 
\begin{itemize}
\item[(1)] $T$ is compact;
\item[(2)] $\Langle\widetilde{T}(z)e,h\Rangle_{\C^{d}} \to 0$ as 
$z \to \partial \Bn$ $e,h\in \C^{d}$ with $\|e\|_{2}=\|h\|_{2}=1$;
\item[(3)] $T_{z}e \to 0$ weakly as $z \to \partial \Bn$, 
    where $e\in\Cd$;
\item[(4)] $\|T_{z}e\|_{L_{\alpha}^{p}} \to 0$ as 
$z\to \partial \Bn$ for any $p>1$.
\end{itemize}
\end{thm}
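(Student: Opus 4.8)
The backbone of the argument is the operator $U_z$, so I would first record its structural properties. Since $\varphi_z$ is an involutive automorphism and $k_z^{(2,\alpha)}$ is the associated normalized reproducing kernel, $U_z$ extends to a self-adjoint unitary involution of $L_{\alpha}^{2}(\Bn;\Cd)$ that preserves $A_{\alpha}^{2}(\Bn;\Cd)$; thus $U_z^2=I$, $U_z^*=U_z$ and $U_zP_\alpha=P_\alpha U_z$. Applying $U_z$ to a constant function $e\in\Cd$ gives $U_ze=k_z^{(2,\alpha)}e$, and a short computation using $U_zP_\alpha=P_\alpha U_z$ yields the composition rule $U_zT_uU_z=T_{u\circ\varphi_z}$. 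Inserting $U_z^2=I$ between consecutive factors then shows that $T_z$ lies in the same class, $T_z=\sum_{j}\prod_{k}T_{u_{j,k}\circ\varphi_z}$, and likewise $(T^{*}T)_z=T_z^{*}T_z$. Finally I would set up the dictionary $\ip{\widetilde T(z)e}{h}_{\Cd}=\ip{T_ze}{h}_{A_{\alpha}^{2}}=\ip{(T_ze)(0)}{h}_{\Cd}$ (using $K_0^{(\alpha)}\equiv 1$, so $\int g\,dv_{\alpha}=g(0)$ for holomorphic $g$), together with $\norm{T_ze}_{A_{\alpha}^{2}}^{2}=\ip{\widetilde{T^{*}T}(z)e}{e}_{\Cd}$; these identify (2) with the boundary vanishing of the diagonal Berezin data, and tie (4) at $p=2$ to the Berezin transform of $T^{*}T$.

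With this in hand the easy implications follow. For (1)$\Rightarrow$(3): $U_ze=k_z^{(2,\alpha)}e\rightharpoonup 0$ weakly as $z\to\partial\Bn$ (normalized kernels are weakly null), so compactness of $T$ gives $\norm{TU_ze}_{A_{\alpha}^{2}}\to 0$, and since $U_z$ is isometric $\norm{T_ze}_{A_{\alpha}^{2}}=\norm{TU_ze}_{A_{\alpha}^{2}}\to 0$; in particular $T_ze\rightharpoonup 0$. For (3)$\Rightarrow$(2) I would test the weak convergence of $T_ze$ against the constant function $h$ and invoke the dictionary. For (4)$\Rightarrow$(3), since $dv_{\alpha}$ is a probability measure one has $\norm{g}_{L_{\alpha}^{1}}\le\norm{g}_{L_{\alpha}^{p}}$; combined with the uniform bound $\norm{T_ze}_{A_{\alpha}^{2}}\le\norm{T}\,\norm{e}$ and the density of bounded holomorphic functions, $L_{\alpha}^{p}$-smallness of $T_ze$ passes to weak $A_{\alpha}^{2}$-convergence.

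The first substantial point is (2)$\Rightarrow$(3), which I would obtain by a normal families argument. Given $z_m\to\partial\Bn$, the uniform bound $\norm{T_{z_m}}=\norm{T}$ lets me extract a subsequence with $T_{z_m}\to S$ in the weak operator topology. For fixed $\lambda$ the transitivity of the automorphism action gives $\ip{\widetilde{T_{z_m}}(\lambda)e}{h}_{\Cd}=\ip{\widetilde T(\varphi_{z_m}(\lambda))R_me}{R_mh}_{\Cd}$ with $R_m$ unitary and $\varphi_{z_m}(\lambda)\to\partial\Bn$, so by (2) this tends to $0$; passing to the limit gives $\widetilde S(\lambda)=0$ for every $\lambda$. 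Injectivity of the Berezin transform then forces $S=0$, and since every subsequential weak-operator limit of $T_z$ vanishes, $T_z\to 0$ in the weak operator topology, which contains (3).

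The genuine obstacle is the passage from these weak statements to the norm statement (4) and thence to compactness (1), and this is exactly where the hypothesis that $T$ is a finite sum of finite products of Toeplitz operators is indispensable: weak vanishing does \emph{not} force $\norm{T_ze}_{L_{\alpha}^{p}}\to 0$ for a general bounded $T$ (indeed $\norm{T_z}=\norm{T}$ is constant). My plan is to prove (2)$\Rightarrow$(4) through the identity $\norm{T_ze}_{A_{\alpha}^{2}}^{2}=\ip{\widetilde{T^{*}T}(z)e}{e}_{\Cd}$, reducing matters to $\widetilde{T^{*}T}\to 0$; as $T^{*}T$ is again a finite sum of finite products, this would be carried out by induction on the length of the products, controlling the semicommutators $T_{u\circ\varphi_z}T_{v\circ\varphi_z}-T_{(uv)\circ\varphi_z}=-H_{\bar u\circ\varphi_z}^{*}H_{v\circ\varphi_z}$ (with $H_{u}:=(I-P_\alpha)M_{u}$) by Hankel-operator estimates in the spirit of Axler--Zheng and Raimondo, adapted to the weight $\alpha$ and to matrix symbols. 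This gives (4) at $p=2$; the remaining exponents follow from the probability-measure comparison for $p\le 2$ and, for $p>2$, from the uniform bound $\norm{T_ze}_{A_{\alpha}^{p}}\le C_p\norm{e}$ (coming from the representation $T_z=\sum_j\prod_kT_{u_{j,k}\circ\varphi_z}$ and the $z$-independence of $\norm{u_{j,k}\circ\varphi_z}_{\infty}$) interpolated against the vanishing $A_{\alpha}^{2}$-norm. Finally (4)$\Rightarrow$(1) would be the essential-norm inequality $\norm{T}_{\mathrm{ess}}\le C\sup_{\norm{e}=1}\limsup_{z\to\partial\Bn}\norm{T_ze}_{A_{\alpha}^{2}}$ for this algebra, which makes $T$ compact once the right-hand side is zero. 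I expect this Hankel/essential-norm core to be the most delicate part of the proof.
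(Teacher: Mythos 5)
Your treatment of the soft implications is correct and in places cleaner than the paper's: the dictionary $\ip{\widetilde T(z)e}{h}_{\Cd}=\ip{T_ze}{h}_{A_{\alpha}^{2}}=\ip{(T_ze)(0)}{h}_{\Cd}$ (via $U_ze=k_ze$ and self-adjointness of $U_z$) gives (1)$\Rightarrow$(3)$\Rightarrow$(2) at once, and your (2)$\Rightarrow$(3) via WOT-compactness of bounded sets plus injectivity of the Berezin transform is a legitimate substitute for the paper's argument, which instead expands $k_\lambda$ in the monomial basis $\{p_\beta\}$, multiplies by $\overline{p_\eta(\lambda)}/k_\lambda(\lambda)$, integrates over $r\Bn$, and extracts $\ip{T_ze}{p_\eta h}_{A_\alpha^2}\to 0$ term by term. (You do need to record that injectivity of the Berezin transform survives the passage to matrix-valued operators, but this reduces componentwise to the scalar fact.)

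The genuine gap is in the two load-bearing implications, both of which you defer to machinery that is either unsubstantiated or at least as deep as the theorem itself. First, your route to (4) via $\norm{T_ze}_{A_\alpha^2}^2=\ip{\widetilde{T^*T}(z)e}{e}_{\Cd}$ requires showing $\widetilde T\to 0\Rightarrow\widetilde{T^*T}\to 0$, and the proposed mechanism (induction on product length, controlling semicommutators $-H_{\bar u}^*H_v$ by Hankel estimates) does not obviously run: for general bounded symbols these semicommutators are neither compact nor Berezin-vanishing, and this is not in fact how Axler--Zheng or Raimondo close this step. The irony is that you already hold everything needed for the paper's much simpler (3)$\Rightarrow$(4): you have weak nullity of $T_ze$ and the uniform bound $\sup_{\norm{e}_2=1}\sup_z\norm{T_ze}_{A_\alpha^p}<\infty$ (Lemma \ref{lem:only}, the one place the Toeplitz structure enters), and one then splits $\int_{\Bn}\norm{(T_ze)(w)}_2^2\,\dva(w)$ into $\Bn\setminus r\Bn$ (small by Cauchy--Schwarz against the uniform $A_\alpha^4$ bound) and $r\Bn$ (small because a weakly null family of holomorphic functions converges uniformly on compacts). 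Second, (4)$\Rightarrow$(1) via the inequality $\norm{T}_{\mathrm{ess}}\lesssim\sup_{e}\limsup_{z\to\partial\Bn}\norm{T_ze}_{A_\alpha^2}$ is circular in spirit: that essential-norm estimate is the Su\'arez / Mitkovski--Su\'arez--Wick theorem, is not available off the shelf for the vector-valued weighted space, and proving it is comparable to proving the statement at hand. The paper instead gives a self-contained argument: $T_{\mathcal R}=U_{\mathcal R}TU_{\mathcal R}$ is realized as an integral operator with matrix kernel determined by $\ip{M(z,w)e_i}{e_j}_{\Cd}=\ip{(T_{\mathcal R}K_ze_j)(w)}{e_i}_{\Cd}$, its radial truncations are shown to be Hilbert--Schmidt via Lemma \ref{lem:vvhs}, and the truncation error is controlled in operator norm by Schur's test with test function $\norm{K_z}_{A_\alpha^2}^{\epsilon/2}$ through Lemma \ref{lem:raiProp3.3}, which is exactly where hypothesis (4) is consumed. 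Without proofs of these two steps your argument does not close.
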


Our main interest is the equivalence between $(1)$ and $(2)$ above, but it is
easier to prove their equivalence by proving that all four statements in 
Theorem \ref{thm:main0} are equivilant. 

\subsection{Discussion of the Theorem.}By now, there are many results that 
relate the compactness of an operator to its
Berezin transform. For example, if $T\in \mathcal{L}(A_{0}^{2}(\B;\C))$ 
can be written as a finite sum of finite products of Toeplitz operators,
Axler and Zheng prove in \cite{AZ} that $T$ is compact if and only if its 
Berezin transform vanishes on the boundary of $\B$. (Recall that 
$(A_{0}^{2}(\B;\C))$ is the standard unwieghted Bergman space on the unit
ball in $\C$). This was improved independently by 
Raimondo who extended the result to the spaces 
\begin{math}A_{0}^{2}(\Bn;\C)\end{math} in \cite{RAI} and Engli\v{s} who 
extended the results in great generality to Bergman spaces on bounded symmetric
domains in \cite{eng}. 

There are also several results along these lines for more general operators
than those that can be written as finite sums of finite products of 
Toeplitz operators. In \cite{eng1} Engli\v{s} proves that any compact
operator is in the operator--norm topology closure of the set of 
finite sums of finite products of Toeplitz operators (this is called the
Toeplitz algebra.) In \cite{Sua}, Su{\'a}rez proves that an operator, $T$, 
in $\mathcal{L}(A_{0}^{p}(\Bn;\C))$ is compact if and only if it is in the
Toeplitz algebra and its Berezin transform vanishes on $\partial\Bn$. This
was extended to the weighted Bergman spaces $(A_{\alpha}^{p}(\Bn;\C))$
in \cite{MSW} By Su{\'a}rez, Mitkovski, and Wick. 
Mitkovski and Wick achieve similar results for Bergman spaces on the polydisc
in \cite{mw1} and they extend these results to bounded symmetric domains 
in \cite{mw2}.

It is interesting to note that the hypothesis that $T$ is a finite sum of finite
products of Toeplitz operators is used only in Lemma \ref{lem:only} to obtain
an easy estimate. (This is also true for proofs of several of the results
mentioned above. See, for example, the proofs in \cite{AZ} and \cite{RAI}.) 
In particular, it is shown that:
\begin{align}\label{eqn:otherop}
\sup_{\|e\|_{2}=1}\sup_{z\in \Bn}\|T_{z}e\|_{A_{\alpha}^{p}} < 
\infty.
\end{align}
Therefore, instead of requiring that $T$ be a finite sum of finite products
of Toeplitz operators, we can require that $T$ satisfy \eqref{eqn:otherop}.
Specifically, as a corollary of the proof of Theorem \ref{thm:main0}, the
following holds:
\begin{cor}
Let $T$ be an operator in $\mathcal{L}(A_{\alpha}^{2})$ that satisfies
\eqref{eqn:otherop}. Then the following are equivalent:
\begin{itemize}
\item[(1)] $T$ is compact;
\item[(2)] $\Langle\widetilde{T}(z)e,h\Rangle_{\C^{d}} \to 0$ as 
$z \to \partial \Bn$ $e,h\in \C^{d}$ with $\|e\|_{2}=\|h\|_{2}=1$;
\item[(3)] $T_{z}e \to 0$ weakly as $z \to \partial \Bn$, 
    where $e\in\Cd$;
\item[(4)] $\|T_{z}e\|_{L_{\alpha}^{p}} \to 0$ as 
$z\to \partial \Bn$ for any $p>1$.
\end{itemize}
\end{cor}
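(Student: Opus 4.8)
The plan is to obtain the Corollary with essentially no new analytic work, by auditing the proof of Theorem~\ref{thm:main0} and isolating the single place at which the hypothesis that $T$ is a finite sum of finite products of Toeplitz operators is used. As the discussion preceding the Corollary already signals, that hypothesis enters the proof of the Theorem only through Lemma~\ref{lem:only}, whose sole output is the uniform bound \eqref{eqn:otherop}. Since the Corollary takes \eqref{eqn:otherop} as a standing assumption, the strategy is simply to delete every appeal to Lemma~\ref{lem:only} and quote the hypothesis in its place; once the audit confirms that $T$ is otherwise used only as a bounded operator on $A_{\alpha}^{2}$ subject to \eqref{eqn:otherop}, the four implications transcribe verbatim.

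To make the audit concrete I would trace the cycle $(1)\Rightarrow(2)\Rightarrow(3)\Rightarrow(4)\Rightarrow(1)$. The implication $(1)\Rightarrow(2)$ is independent of \eqref{eqn:otherop}: since $k_{z}^{(2,\alpha)}h\to 0$ weakly as $z\to\partial\Bn$, compactness of $T$ forces $\norm{T(k_{z}^{(2,\alpha)}e)}_{A_{\alpha}^{2}}\to 0$, and the defining relation $\ip{\widetilde{T}(z)e}{h}_{\Cd}=\ip{T(k_{z}e)}{k_{z}h}_{A_{\alpha}^{2}}$ gives $(2)$. For the remaining implications one uses that $U_{z}$ is a self-adjoint unitary involution on $A_{\alpha}^{2}$ carrying $k_{w}^{(2,\alpha)}$ to a unimodular multiple of $k_{\varphi_{z}(w)}^{(2,\alpha)}$; this yields the identities $\widetilde{T_{z}}(w)=\widetilde{T}(\varphi_{z}(w))$ and the automatic bound $\norm{T_{z}e}_{A_{\alpha}^{2}}\le\norm{T}\,\norm{e}_{2}$. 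Because $\varphi_{z}(w)\to\partial\Bn$ locally uniformly in $w$, condition $(2)$ forces $\widetilde{T_{z}}\to 0$ locally uniformly, and together with the $A_{\alpha}^{2}$-bound this delivers the weak nullity in $(3)$.

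The one step that genuinely consumes \eqref{eqn:otherop} is $(3)\Rightarrow(4)$: weak nullity of $T_{z}e$ produces pointwise decay $(T_{z}e)(w)\to 0$ through the bounded point evaluations, while \eqref{eqn:otherop} at an exponent larger than $p$ makes the family $\{\norm{T_{z}e}_{2}^{p}\}$ uniformly integrable against $\dva$; Vitali's convergence theorem then upgrades the pointwise decay to $\norm{T_{z}e}_{L_{\alpha}^{p}}\to 0$. Finally $(4)\Rightarrow(1)$ reconstructs $T=U_{z}T_{z}U_{z}$ and runs the limit-operator argument of the Theorem, in which $T$ again appears only through its $A_{\alpha}^{2}$-norm and \eqref{eqn:otherop}.

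The main obstacle is therefore not an estimate but a matter of verification: I must confirm that the uniform-integrability step and the compactness step of the Theorem really do treat $T$ as an abstract bounded operator subject to \eqref{eqn:otherop}, with no hidden reliance on the product structure of the symbols. Once that check is complete---and the discussion after Theorem~\ref{thm:main0} asserts it is, Lemma~\ref{lem:only} being the unique point of entry---the proof of Theorem~\ref{thm:main0} applies word for word, and the Corollary follows.
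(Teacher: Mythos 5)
Your proposal is correct and is essentially the paper's own justification: the paper gives no separate proof of the Corollary, instead observing (exactly as you do) that the Toeplitz structure of $T$ enters the proof of Theorem \ref{thm:main0} only through Lemma \ref{lem:only}, whose conclusion \eqref{eqn:otherop} is now assumed outright, so the four implications transcribe unchanged (your Vitali phrasing of $(3)\Rightarrow(4)$ is just a repackaging of the paper's split of $\Bn$ into $r\Bn$ and a thin annulus controlled by the $L_{\alpha}^{4}$ bound). The one point your audit, like the paper's, glosses over is that the second Schur-test estimate in $(4)\Rightarrow(1)$ invokes $\sup_{z}\|T^{*}_{-z}e\|_{A_{\alpha}^{q}}$, i.e.\ the bound \eqref{eqn:otherop} for $T^{*}$, which is automatic for finite sums of products of Toeplitz operators but is not literally contained in the stated hypothesis on $T$ alone.
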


This is not a new observation. Indeed, in \cite{mz}, the authors prove a 
similar result.

\section{Preliminaries}
We first fix notation that will last for the rest of the paper. The vectors 
$\{e_{i}\}$, etc. will denote the standard orthonormal basis 
vectors in $\Cd$. The letter $e$ will always denote a unit vector in $\Cd$. 
For vectors in $\Cd$, 
$\norm{\cdot}_{p}$ will denote the $l^{p}$ norm on $\Cd$. If 
$M$ is a $d\times d$ matrix, $\norm{M}$ will denote any convenient matrix norm. 
Since all 
norms of matrices are equivalent in finite dimensions, the exact norm used does 
not matter for quantitative considerations. Additionally, 
$M_{(i,j)}$ will denote the $(i,j)$ entry of $M$.
Finally, to lighten notation, fix an integer $d>1$, an integer $n\geq 1$ and a 
real $\alpha > -1$. Because of
this, we will usually suppress these constants in our notation. For example, the
reproducing kernels will be written as $K_{z}$ instead of $K_{z}^{(\alpha)}$
and $k_{z}^{(2,\alpha)}$ is simply $k_{z}$. Furthermore, we will write
$L_{\alpha}^{2}$ and $A_{\alpha}^{2}$ instead of $L_{\alpha}^{2}(\Bn;\Cd)$ and
$A_{\alpha}^{2}(\Bn;\Cd)$.
(We keep the $\alpha$ in the notation
for the spaces because this is customary). 

\subsection{Well-Known Results and Extensions to the Present Case}
We will discuss several well-known results about the standard Bergman Spaces, 
$A_{\alpha}^{p}(\Bn;\C)$ and state and prove their generalizations to the 
present vector-valued Bergman Spaces, $\apa$. 

Let $\varphi_{z}$ be the automorphisms of the ball that 
interchange $z$ and $0$. The automorphisms are used to define the following 
metrics:
\begin{align*}
\rho(z,w):=\abs{\varphi_{z}(w)} \quad\textnormal{and}\quad 
\beta(z,w):=\frac{1}{2}\log\frac{1+\rho(z,w)}{1-\rho(z,w)}.
\end{align*}

These metrics are invariant under the maps $\varphi_{z}$. Let $D(z,r)$ be the 
ball in the $\beta$ metric centered at $z$ with raduis $r$. Recall the following
identity:
\begin{align*}
 1-\abs{\varphi_{z}(w)}^{2}=\frac{(1-\abs{z}^{2})(1-\abs{w}^{2})}
 {\abs{1-\overline{z}w}^{2}}
\end{align*}
The following change of variables formula is in \cite[Prop 1.13]{Zhu}:
\begin{align}
\label{eqn:cov}
\int_{\Bn}f(w)\dva (w) = \int_{B_n}(f\circ \varphi_{z})(w) 
\abs{\nk{z}{2}(w)}^2 \dva (w). 
\end{align}
Straight-forward computations reveal:
\begin{align}
\label{eqn:eqOne}
\nk{z}{2}(\varphi_{z}(w)) \nk{z}{2}(w) \equiv 1.
\end{align}

The following propositions appear in \cite{Zhu}.

\begin{prop}\label{prop:soh2.20}
If \begin{math}a\in \Bn\end{math} and \begin{math}z\in D(a,r)\end{math}, 
there exists a constant depending only on \begin{math}r\end{math} such that 
\begin{math}1-|a|^2 \simeq 1-|z|^{2} \simeq \abs{1-\ip{a}{z}} \end{math}.
\end{prop}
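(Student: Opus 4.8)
The plan is to reduce the membership $z\in D(a,r)$ to a uniform quantitative bound on $\rho(a,z)=\abs{\varphi_a(z)}$ and then feed this into the identity for $1-\abs{\varphi_a(z)}^2$ recorded just above the statement. Since $\beta(a,z)=\frac12\log\frac{1+\rho(a,z)}{1-\rho(a,z)}$ is strictly increasing in $\rho(a,z)\in[0,1)$, the condition $z\in D(a,r)$ is equivalent to $\rho(a,z)=\abs{\varphi_a(z)}\le s$, where $s:=\tanh r<1$ depends only on $r$. Consequently $1-s^2\le 1-\abs{\varphi_a(z)}^2\le 1$, the right-hand inequality being automatic. Substituting $z$ for $w$ and $a$ for $z$ in
\[
1-\abs{\varphi_a(z)}^2=\frac{(1-\abs{a}^2)(1-\abs{z}^2)}{\abs{1-\ip{a}{z}}^2}
\]
and using the two bounds on the left-hand side immediately yields
\[
(1-\abs{a}^2)(1-\abs{z}^2)\le \abs{1-\ip{a}{z}}^2\le \tfrac{1}{1-s^2}(1-\abs{a}^2)(1-\abs{z}^2),
\]
so that $\abs{1-\ip{a}{z}}^2\simeq (1-\abs{a}^2)(1-\abs{z}^2)$ with constants depending only on $r$.

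The remaining task is to decouple the product, that is, to show $1-\abs{a}^2\simeq 1-\abs{z}^2$; this is the only step requiring an input beyond the displayed identity. For this I would use the elementary estimate
\[
\abs{1-\ip{a}{z}}\ge 1-\abs{\ip{a}{z}}\ge 1-\abs{a}\abs{z}\ge 1-\abs{a}\ge \tfrac12(1-\abs{a}^2),
\]
obtained from the reverse triangle inequality, Cauchy--Schwarz, $\abs{z}<1$, and $1+\abs{a}\le 2$. By the symmetry $\abs{\varphi_a(z)}=\abs{\varphi_z(a)}$ (equivalently, the invariance of $\abs{1-\ip{a}{z}}$ under interchanging $a$ and $z$) the same bound holds with $a$ replaced by $z$. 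Inserting $\abs{1-\ip{a}{z}}\ge \frac12(1-\abs{a}^2)$ into the left side of the upper estimate above gives $\frac14(1-\abs{a}^2)^2\le \frac{1}{1-s^2}(1-\abs{a}^2)(1-\abs{z}^2)$; cancelling the positive factor $1-\abs{a}^2$ produces $1-\abs{z}^2\ge \frac{1-s^2}{4}(1-\abs{a}^2)$, and the reverse inequality follows by symmetry. This establishes $1-\abs{a}^2\simeq 1-\abs{z}^2$.

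Finally, combining $1-\abs{a}^2\simeq 1-\abs{z}^2$ with $\abs{1-\ip{a}{z}}^2\simeq (1-\abs{a}^2)(1-\abs{z}^2)$ gives $\abs{1-\ip{a}{z}}^2\simeq (1-\abs{a}^2)^2$, hence $\abs{1-\ip{a}{z}}\simeq 1-\abs{a}^2\simeq 1-\abs{z}^2$, which is exactly the claim, with all implied constants depending only on $r$. I do not expect a serious obstacle: the whole argument is driven by the invariant identity, and the only point needing care is to avoid circularity when separating the factors $1-\abs{a}^2$ and $1-\abs{z}^2$, which is handled by the one-sided bound $\abs{1-\ip{a}{z}}\ge \frac12(1-\abs{a}^2)$ together with the symmetry of the Bergman distance.
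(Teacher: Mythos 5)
Your argument is correct and complete. The paper itself offers no proof of this proposition (it is quoted from Zhu's book), so there is nothing to diverge from; your write-up is a clean, self-contained version of the standard argument, driven entirely by the invariant identity $1-\abs{\varphi_a(z)}^2=(1-\abs{a}^2)(1-\abs{z}^2)/\abs{1-\ip{a}{z}}^2$ that the paper records just above the statement. All the steps check out: $z\in D(a,r)$ does translate to $\rho(a,z)\le\tanh r$ because $\beta=\operatorname{arctanh}\rho$; the two-sided bound on $\abs{1-\ip{a}{z}}^2$ in terms of the product $(1-\abs{a}^2)(1-\abs{z}^2)$ follows immediately; and the decoupling via $\abs{1-\ip{a}{z}}\ge 1-\abs{a}\abs{z}\ge\frac12(1-\abs{a}^2)$ (and its counterpart with $a$ and $z$ interchanged, which needs only $\ip{z}{a}=\overline{\ip{a}{z}}$) correctly breaks the apparent circularity. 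The constants visibly depend only on $s=\tanh r$, as required. No gaps.
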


\begin{prop}\label{prop:soh2.24}
Suppose $r>0$, $p>0$, and $\alpha > -1$. Then there exists a positive constant
that depends only on $\alpha$ and $r$ such
that
\begin{align*}
\abs{f(\lambda)}^{p} \lesssim \frac{1}{(1-|\lambda|^{2})^{n+1+\alpha}}
\int_{D(\lambda,r)}\abs{f(w)}^{p}dv_{\alpha}(w)
\end{align*}
for all holomorphic \begin{math}f:\Bn\to\C\end{math} and all 
\begin{math}\lambda\in \Bn\end{math}.
\end{prop}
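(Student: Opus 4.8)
The plan is to reduce to the case $\lambda=0$ using the $\varphi_{\lambda}$-invariance of the metric $\beta$, and to treat that case by the sub-mean-value property of $|f|^{p}$. The powers of $1-|\lambda|^{2}$ will then appear automatically through the normalized kernel in the change of variables formula \eqref{eqn:cov}.

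First I would establish the estimate at $\lambda=0$. Since $f$ is holomorphic, $|f|^{p}$ is plurisubharmonic on $\Bn$ for every $p>0$: away from the zeros of $f$ this follows from $|f|^{p}=\exp(p\log|f|)$ together with the pluriharmonicity of $\log|f|$ and the convexity of $t\mapsto e^{pt}$, and the zero set is absorbed by continuity (or by approximating $f$ with non-vanishing functions). In particular $|f|^{p}$ is subharmonic on $\Bn\subset\R^{2n}$, so the Euclidean sub-mean-value inequality applies. Because $\rho(0,w)=|w|$, the set $D(0,r)$ is exactly the Euclidean ball of radius $t:=\tanh r$, and
\[
|f(0)|^{p}\le \frac{1}{V(\{|w|<t\})}\int_{|w|<t}|f(w)|^{p}\,dV(w).
\]
On $\{|w|<t\}$ the weight $(1-|w|^{2})^{\alpha}$ is bounded above and below by positive constants depending only on $\alpha$ and $r$ (this is Proposition \ref{prop:soh2.20} with center $a=0$), so $dV\simeq\dva$ there and $|f(0)|^{p}\lesssim\int_{D(0,r)}|f(w)|^{p}\,\dva(w)$; this is the claim at $\lambda=0$, since $(1-|0|^{2})^{n+1+\alpha}=1$.

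Next I would transfer to a general $\lambda$ by composing with $\varphi_{\lambda}$. Set $g:=f\circ\varphi_{\lambda}$, which is holomorphic and satisfies $g(0)=f(\varphi_{\lambda}(0))=f(\lambda)$, so the base case gives $|f(\lambda)|^{p}\lesssim\int_{D(0,r)}|g(w)|^{p}\,\dva(w)$. To recast the right-hand side as an integral over $D(\lambda,r)$, apply \eqref{eqn:cov} to $|f|^{p}\chi_{D(\lambda,r)}$; since $\varphi_{\lambda}$ is an involution and $\beta$ is $\varphi_{\lambda}$-invariant one has $\varphi_{\lambda}(D(0,r))=D(\lambda,r)$, whence
\[
\int_{D(\lambda,r)}|f(w)|^{p}\,\dva(w)=\int_{D(0,r)}|g(w)|^{p}\,|k_{\lambda}(w)|^{2}\,\dva(w).
\]
On $D(0,r)$ we have $|w|\le t<1$, so $1-t\le|1-\overline{\lambda}w|\le 1+t$, and therefore
\[
|k_{\lambda}(w)|^{2}=\frac{(1-|\lambda|^{2})^{n+1+\alpha}}{|1-\overline{\lambda}w|^{2(n+1+\alpha)}}\simeq (1-|\lambda|^{2})^{n+1+\alpha}
\]
with constants depending only on $r$ and $\alpha$. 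Combining the three displays gives $\int_{D(\lambda,r)}|f|^{p}\,\dva\gtrsim(1-|\lambda|^{2})^{n+1+\alpha}|f(\lambda)|^{p}$, which is the asserted inequality after dividing.

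Once the reduction is in place the remaining computations are routine bookkeeping. I expect the only genuinely delicate point to be the base case: justifying the sub-mean-value inequality for $|f|^{p}$ when $0<p<1$, where one must confirm that $|f|^{p}$ remains subharmonic across the zeros of $f$. The transfer step is then purely formal, relying on the involutive nature of $\varphi_{\lambda}$, the invariance $\varphi_{\lambda}(D(0,r))=D(\lambda,r)$, and the elementary bound on $|1-\overline{\lambda}w|$ for $w\in D(0,r)$.
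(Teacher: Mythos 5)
Your argument is correct, and it is essentially the standard proof of this fact: the paper itself gives no proof, simply citing \cite{Zhu}, where the lemma is established by exactly this route (sub-mean-value property of the plurisubharmonic function $\abs{f}^{p}$ at the origin, then transfer to general $\lambda$ via $\varphi_{\lambda}$, the change of variables \eqref{eqn:cov}, and the bound $\abs{1-\overline{\lambda}w}\simeq 1$ on $D(0,r)$). The one point you flag as delicate — subharmonicity of $\abs{f}^{p}$ for $0<p<1$ across the zero set — is handled correctly by your remark that the sub-mean-value inequality is trivial at a zero of $f$, so nothing further is needed.
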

\noindent The following vector-valued analogue will be used: 
\begin{prop}\label{prop:vvsoh2.24}
Suppose $r>0$, $p\geq 1$, and $\alpha>-1$. Then there exists a positive 
constant that depends only on $\alpha,r$, and $d$ such that
\begin{align*}
\sup_{z\in D(\lambda,r)}\norm{f(z)}_{p}^{p} \lesssim
\frac{1}{(1-|\lambda|^{2})^{n+1+\alpha}}
\int_{D(\lambda,2r)}\norm{f(w)}_{p}^{p}dv_{\alpha}(w)
\end{align*}
for all holomorphic \begin{math}f:\Bn\to\Cd\end{math} and all 
\begin{math}\lambda\in \Bn\end{math}
\end{prop}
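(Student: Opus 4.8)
The plan is to reduce the vector-valued estimate to the scalar estimate of Proposition \ref{prop:soh2.24}, applied coordinate by coordinate. Write $f=(f_1,\dots,f_d)$, where $f_k(w)=\ip{f(w)}{e_k}_{\Cd}$ is a scalar holomorphic function on $\Bn$, so that $\norm{f(w)}_p^p=\sum_{k=1}^{d}\abs{f_k(w)}^p$ for every $w$. The crucial maneuver is to fix an arbitrary $z\in D(\lambda,r)$ and apply the scalar proposition to each $f_k$ \emph{centered at $z$} rather than at $\lambda$, obtaining
\[
\abs{f_k(z)}^p \lesssim \frac{1}{(1-\abs{z}^2)^{n+1+\alpha}}\int_{D(z,r)}\abs{f_k(w)}^p\,\dva(w),
\]
with a constant depending only on $\alpha$ and $r$, and in particular independent of both $k$ and $z$.

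Two geometric facts then convert this into the stated uniform bound. Since $z\in D(\lambda,r)$, Proposition \ref{prop:soh2.20} gives $1-\abs{z}^2\simeq 1-\abs{\lambda}^2$, so the prefactor is comparable to $(1-\abs{\lambda}^2)^{-(n+1+\alpha)}$. Second, by the triangle inequality for the invariant metric $\beta$, any $w\in D(z,r)$ satisfies $\beta(w,\lambda)\le \beta(w,z)+\beta(z,\lambda)<2r$, so that $D(z,r)\subseteq D(\lambda,2r)$; this inclusion is exactly what forces the enlarged radius $2r$ on the right-hand side. Enlarging the domain of integration and summing over $k$ yields
\[
\norm{f(z)}_p^p=\sum_{k=1}^{d}\abs{f_k(z)}^p \lesssim \frac{1}{(1-\abs{\lambda}^2)^{n+1+\alpha}}\int_{D(\lambda,2r)}\norm{f(w)}_p^p\,\dva(w).
\]

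Because the right-hand side no longer depends on $z$, taking the supremum over $z\in D(\lambda,r)$ costs nothing, and the proof is complete. I do not expect a serious obstacle here: the only point requiring genuine care is that the scalar estimate must be invoked at the moving center $z$, not at $\lambda$, precisely so that the comparability $1-\abs{z}^2\simeq 1-\abs{\lambda}^2$ and the inclusion $D(z,r)\subseteq D(\lambda,2r)$ can be applied uniformly and the supremum pulled outside. The dependence of the final constant on $d$ permitted by the statement is in fact not needed—summing the $d$ coordinate inequalities preserves the common scalar constant—but allowing it is harmless and keeps the bookkeeping simple.
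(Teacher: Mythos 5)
Your proof is correct and follows essentially the same route as the paper: reduce to the scalar estimate of Proposition \ref{prop:soh2.24} applied at the moving center $z$, then use Proposition \ref{prop:soh2.20} to replace $1-\abs{z}^2$ by $1-\abs{\lambda}^2$ and the inclusion $D(z,r)\subseteq D(\lambda,2r)$ to enlarge the domain of integration. The only cosmetic difference is that the paper linearizes the norm by duality, writing $\norm{f(z)}_p=\sup_{\norm{e}_q=1}\abs{\ip{e}{f(z)}_{\Cd}}$ and bounding $\abs{\ip{e}{f(w)}_{\Cd}}^p\le\norm{f(w)}_p^p$ by H\"older, whereas you decompose into coordinates and sum; both are valid reductions to the scalar case.
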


\begin{proof}Let $q$ be conjugate exponent to $p$. Then
\begin{align*}
\sup_{z\in D(\lambda,r)}\|f(z)\|_{p}^{p} = 
\sup_{z\in D(\lambda,r)}\sup_{\|e\|_{q}=1}\abs{\ip{e}{f(z)}_{\Cd}}^{p}.
\end{align*}
By definition, \begin{math}\langle e,f(z)\rangle_{\Cd}\end{math} is 
holomorphic for 
all \begin{math}e\in \C^{d}\end{math}. By Proposition \ref{prop:soh2.24} and
Proposition \ref{prop:soh2.20}, for 
\begin{math}e\in\Cd\end{math} and \begin{math}z\in D(\lambda,r)\end{math} 
there holds:
\begin{align*}
|\langle e,f(z) \rangle_{\Cd} |^{p} &\lesssim \frac{1}{(1-|z|^{2})^{n+1+\alpha}}
\int_{D(z,r)}|
\langle e,f(w)\rangle_{\Cd}|^{p}dv_{\alpha}(w)
\\& \lesssim \frac{1}{(1-|z|^{2})^{n+1+\alpha}}\int_{D(z,r)}
   \|f(w)\|_{p}^{p}dv_{\alpha}(w)
\\& \simeq \frac{1}{(1-|\lambda|^{2})^{n+1+\alpha}}\int_{D(z,r)}
   \|f(w)\|_{p}^{p}dv_{\alpha}(w)
\\& \leq \frac{C}{(1-|\lambda|^{2})^{n+1+\alpha}}\int_{D(\lambda,2r)}
   \|f(w)\|_{p}^{p}dv_{\alpha}(w).
\end{align*}
Which completes the proof.
\end{proof}

The next lemma is in \cite{Zhu}:

\begin{lem}\label{lem:intEst}
\label{Growth}
For $z\in \Bn$, $s$ real and $t>-1$, let
\begin{align*}
F_{s,t}(z):=\int_{\Bn}\frac{(1-\abs{w}^2)^t}{\abs{1-\overline{w}z}^s}\,dv(w).
\end{align*}
Then $F_{s,t}$ is bounded if $s<n+1+t$ and grows as $(1-\abs{z}^2)^{n+1+t-s}$ 
when $\abs{z}\to 1$ if $s>n+1+t$.
\end{lem}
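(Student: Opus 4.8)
The plan is to reduce the computation of $F_{s,t}$ to a single power series in $\abs{z}^2$ and then read off its boundary behaviour from the asymptotics of its coefficients. Since $F_{s,t}(z)$ is invariant under the unitary group acting on $z$, I may assume $z=\abs{z}e_1$, so that $\langle z,w\rangle=\abs{z}\overline{w_1}$ and the kernel depends on $w$ only through $w_1$. Writing $c=s/2$ and expanding both factors of $\abs{1-\langle z,w\rangle}^{-s}=(1-\langle z,w\rangle)^{-c}\,\overline{(1-\langle z,w\rangle)^{-c}}$ by the binomial series
\begin{align*}
\frac{1}{(1-\langle z,w\rangle)^{c}}=\sum_{k=0}^{\infty}\frac{\Gamma(c+k)}{\Gamma(c)\,k!}\,\langle z,w\rangle^{k},
\end{align*}
which converges uniformly for $w\in\overline{\Bn}$ because $\abs{\langle z,w\rangle}\le\abs{z}<1$ for fixed $z\in\Bn$, I would substitute into the integral and integrate term by term (justified by this uniform convergence together with the finiteness of the measure, as $t>-1$).

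Next I would use rotation invariance in the $w$ variable: replacing $w$ by $e^{i\theta}w$ multiplies $\langle z,w\rangle^{j}\,\overline{\langle z,w\rangle}^{\,k}$ by $e^{i\theta(k-j)}$ while fixing $(1-\abs{w}^2)^t\,dv(w)$, so integrating in $\theta$ kills every off-diagonal term and only $j=k$ survives. Hence
\begin{align*}
F_{s,t}(z)=\sum_{k=0}^{\infty}\frac{\Gamma(c+k)^2}{\Gamma(c)^2(k!)^2}\,\abs{z}^{2k}\,I_k,\qquad I_k:=\int_{\Bn}\abs{w_1}^{2k}(1-\abs{w}^2)^t\,dv(w).
\end{align*}
The scalar integrals $I_k$ are evaluated by integrating out $w_2,\dots,w_n$ (one change of variables produces a factor $(1-\abs{w_1}^2)^{\,n-1+t}$) and then passing to polar coordinates in $w_1$, which turns the remainder into a Beta integral. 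This gives $I_k=C\,k!\,\Gamma(n+t)/\Gamma(n+t+k+1)$ with $C$ independent of $k$, so that
\begin{align*}
F_{s,t}(z)=C'\sum_{k=0}^{\infty}a_k\,\abs{z}^{2k},\qquad a_k=\frac{\Gamma(c+k)^2}{\Gamma(k+1)\,\Gamma(n+t+1+k)}.
\end{align*}

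Finally I would analyse this power series as $\abs{z}\to 1$. Stirling's formula, in the form $\Gamma(k+a)/\Gamma(k+b)\sim k^{a-b}$, gives $a_k\sim k^{\,2c-n-t-2}=k^{\,s-n-t-2}$. Setting $x=\abs{z}^2$: if $s<n+1+t$ the exponent is $<-1$, the coefficients are summable, and $\sum_k a_k x^k$ stays bounded on $[0,1]$, so $F_{s,t}$ is bounded. If $s>n+1+t$ the exponent exceeds $-1$, and an Abelian theorem for power series with regularly varying coefficients (if $a_k\sim k^{\beta}$ with $\beta>-1$, then $\sum_k a_k x^k\sim\Gamma(\beta+1)(1-x)^{-\beta-1}$ as $x\to 1^-$) yields $F_{s,t}(z)\asymp(1-\abs{z}^2)^{\,n+1+t-s}$, as claimed.

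The main obstacle is this last step: transferring the coefficient asymptotics into the boundary growth of the series. The cleanest route is to invoke the standard Abelian lemma quoted above, or to prove it directly by comparison with $\sum_k\binom{\beta+k}{k}x^k=(1-x)^{-\beta-1}$; this requires only the one-sided coefficient asymptotic, not a Tauberian converse. The Gamma-function bookkeeping and the justification of term-by-term integration are routine, and the borderline case $s=n+1+t$ (logarithmic growth) lies outside the statement and may be ignored.
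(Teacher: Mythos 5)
Your proposal is correct. The paper offers no proof of this lemma at all --- it simply cites Zhu --- and your argument is precisely the standard proof of the cited result (Theorem 1.12 in Zhu's book): binomial expansion of $\abs{1-\langle z,w\rangle}^{-s}$, rotation invariance in $w$ to kill the off-diagonal terms, Beta-function evaluation of the diagonal integrals $I_k$, and Stirling together with the Abelian comparison $\sum_k \binom{\beta+k}{k}x^k=(1-x)^{-\beta-1}$ to read off boundedness when $s<n+1+t$ and the growth rate $(1-\abs{z}^2)^{n+1+t-s}$ when $s>n+1+t$.
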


The next lemma is the version of Schur's Test that we will use. Note that it is 
essentially the same as the ``usual'' Schur's Test. The proof is omitted. 
\begin{lem}\label{eqn:vvschur}Suppose $(X,\mu)$ is a measure space,
$1<p<\infty$, and $q$ conjugate exponent to $p$.
Let \begin{math}T\end{math} be an integral operator with (matrix-valued) kernel 
\begin{math}M(x,y)\end{math} and let $f$ be vector--valued. That is, 
\begin{align*}
(Tf)(x)=\int_{X}M(x,y)f(y)d\mu(y).
\end{align*}
If there is a \begin{math}C_1\end{math} and a \begin{math}C_2\end{math} and a 
positive function \begin{math}h\end{math} such that the following is true:
\begin{align*}
\int_{X}\|M(x,y)\|h(y)^{q} d\mu(y) 
&\leq C_1h(x)^{q}
\end{align*}
for almost every $x\in X$, and
\begin{align*}
\int_{X}\|M(x,y)\|h(x)^{p} d\mu(x) 
&\leq C_2 h(y)^{p}
\end{align*}
for almost every $y\in X$
then \begin{math}T:L^{p}(X,\mu)\to L^{p}(X,\mu)\end{math} is bounded with 
norm at most \begin{math}C_{1}^{1/q}C_{2}^{1/p}\end{math}.
\end{lem}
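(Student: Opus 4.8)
The plan is to reduce the matrix-valued statement to the familiar scalar Schur test by a pointwise domination, and then to run the standard Hölder--Fubini argument on the resulting nonnegative scalar kernel. First I would record the pointwise bound that drives the whole reduction: for a.e.\ $x$,
\begin{align*}
\norm{(Tf)(x)} = \norm{\int_{X} M(x,y)f(y)\,d\mu(y)} \leq \int_{X}\norm{M(x,y)}\,\norm{f(y)}\,d\mu(y) =: (Sg)(x),
\end{align*}
where $g(y):=\norm{f(y)}$ and $S$ is the scalar integral operator with the nonnegative kernel $K(x,y):=\norm{M(x,y)}$. Here I would take $\norm{M(x,y)}$ to be the operator norm induced by the vector norm defining the $L^{p}$ space, so that the submultiplicative inequality $\norm{M(x,y)f(y)}\leq\norm{M(x,y)}\norm{f(y)}$ holds with constant one; any other matrix norm differs only by a dimensional constant and would merely rescale the final estimate. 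Since $\norm{(Tf)(x)}\leq (Sg)(x)$ pointwise and the $L^{p}$ norm is monotone, it suffices to prove $\norm{Sg}_{L^{p}}\leq C_{1}^{1/q}C_{2}^{1/p}\norm{g}_{L^{p}}$ for nonnegative scalar $g$.

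For the scalar estimate I would fix $x$, factor $K(x,y)=K(x,y)^{1/q}K(x,y)^{1/p}$, insert the weight $h$, and apply Hölder's inequality with exponents $q$ and $p$:
\begin{align*}
(Sg)(x) &= \int_{X}\Big[K(x,y)^{1/q}h(y)\Big]\Big[K(x,y)^{1/p}\tfrac{g(y)}{h(y)}\Big]d\mu(y)\\
&\leq \Big(\int_{X}K(x,y)h(y)^{q}d\mu(y)\Big)^{1/q}\Big(\int_{X}K(x,y)\tfrac{g(y)^{p}}{h(y)^{p}}d\mu(y)\Big)^{1/p}.
\end{align*}
The first hypothesis bounds the first factor by $C_{1}^{1/q}h(x)$, so raising to the $p$-th power and integrating in $x$ gives, after Tonelli (all integrands are nonnegative),
\begin{align*}
\int_{X}(Sg)(x)^{p}\,d\mu(x) \leq C_{1}^{p/q}\int_{X}\frac{g(y)^{p}}{h(y)^{p}}\Big(\int_{X}K(x,y)h(x)^{p}\,d\mu(x)\Big)d\mu(y).
\end{align*}
The second hypothesis bounds the inner integral by $C_{2}h(y)^{p}$, the weight cancels, and I am left with $\int_{X}(Sg)^{p}\,d\mu\leq C_{1}^{p/q}C_{2}\int_{X}g^{p}\,d\mu$, that is, $\norm{Sg}_{L^{p}}\leq C_{1}^{1/q}C_{2}^{1/p}\norm{g}_{L^{p}}$. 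Combining this with the pointwise domination yields the claimed bound for $T$.

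I do not expect a serious obstacle, since the argument is the classical Schur test; the only point requiring care is the passage from the matrix kernel to the scalar kernel. The content of that step is entirely the submultiplicativity $\norm{M(x,y)f(y)}\leq\norm{M(x,y)}\norm{f(y)}$ together with the triangle inequality for the vector-valued integral, after which the vector structure plays no further role and the weight $h$ may be taken to be exactly the scalar weight supplied in the hypotheses. The finite-dimensionality of $\Cd$ guarantees that the choice of matrix norm is immaterial up to equivalence, so the two displayed hypotheses are genuinely the scalar Schur conditions for $K=\norm{M}$, and the proof reduces to the one above verbatim.
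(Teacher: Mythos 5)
Your proof is correct and is precisely the argument the paper has in mind: the paper omits the proof, remarking only that it is ``essentially the same as the `usual' Schur's Test,'' and your reduction via $\norm{(Tf)(x)}\leq\int_X\norm{M(x,y)}\norm{f(y)}\,d\mu(y)$ followed by the standard H\"older--Tonelli computation is exactly that. The one point you rightly flag --- that the matrix norm should be submultiplicative against the vector norm defining $L^p$, with other norms costing only a dimensional constant --- is consistent with the paper's stated convention that the choice of matrix norm is immaterial.
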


There is also a vector--valued test to determine an operator's membership in the
Hilbert-Schmidt Class.
\begin{defn}
The matrix--valued function, $M(z,w)$, is in 
\begin{math}L^{2}(\Bn\times\Bn,\dva\times\dva)\end{math} if
\begin{align*}
\norm{M}_{L^{2}(\Bn\times\Bn,\dva \times \dva)}^{2}:=
\int_{\Bn}\int_{\Bn}\norm{M(z,w)}^2dv_{\alpha}(w)dv_{\alpha}(z)<\infty.
\end{align*}
\end{defn}
\begin{lem}[Vector--Valued Hilbert Schmidt Test]\label{lem:vvhs}
A linear operator \begin{math}T\in\mathcal{L}(L_{\alpha}^{2})\end{math} is 
Hilbert-Schmidt if there is a matrix-valued function \begin{math}M\end{math} in 
\begin{math}L^{2}(\Bn\times\Bn,\dva\times\dva)\end{math} such that
\begin{align*}
(Tf)(z)=\int_{\Bn}M(z,w)f(w)\dva (w).
\end{align*}
\end{lem}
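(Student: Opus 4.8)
The plan is to compute the Hilbert--Schmidt norm of $T$ directly against a tensor--product orthonormal basis, thereby reducing the entire question to Parseval's identity in the scalar space $L_{\alpha}^{2}(\Bn;\C)$ together with the equivalence of matrix norms on $\Cd$. I would fix a complete orthonormal basis $\{\psi_{j}\}$ of $L_{\alpha}^{2}(\Bn;\C)$; then $\{\psi_{j}e_{k}\}_{j,k}$, where $(\psi_{j}e_{k})(w)=\psi_{j}(w)e_{k}$ and $\{e_{k}\}_{k=1}^{d}$ is the standard basis of $\Cd$, is readily checked to be a complete orthonormal basis of $L_{\alpha}^{2}=L_{\alpha}^{2}(\Bn;\Cd)$. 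Since the Hilbert--Schmidt norm is $\norm{T}_{HS}^{2}=\sum_{j,k}\norm{T(\psi_{j}e_{k})}_{L_{\alpha}^{2}}^{2}$, it suffices to show this sum is finite (and in fact comparable to $\norm{M}_{L^{2}(\Bn\times\Bn)}^{2}$).

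First I would expand each term in coordinates. Writing $M_{(i,k)}$ for the $(i,k)$ entry of $M$, the integral representation gives $\ip{(T(\psi_{j}e_{k}))(z)}{e_{i}}_{\Cd}=\int_{\Bn}M_{(i,k)}(z,w)\psi_{j}(w)\dva(w)$. For fixed $z,i,k$ I set $h_{z,i,k}(w):=\overline{M_{(i,k)}(z,w)}$, so that this coordinate is exactly the scalar inner product $\ip{\psi_{j}}{h_{z,i,k}}_{L_{\alpha}^{2}(\Bn;\C)}$. The hypothesis $M\in L^{2}(\Bn\times\Bn,\dva\times\dva)$ guarantees, via Tonelli, that $h_{z,i,k}\in L_{\alpha}^{2}(\Bn;\C)$ for almost every $z$, so Parseval's identity applies and yields $\sum_{j}\abs{\ip{\psi_{j}}{h_{z,i,k}}}^{2}=\norm{h_{z,i,k}}_{L_{\alpha}^{2}}^{2}=\int_{\Bn}\abs{M_{(i,k)}(z,w)}^{2}\dva(w)$.

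Assembling these, I would write $\norm{T}_{HS}^{2}=\sum_{j,k}\int_{\Bn}\sum_{i}\abs{\ip{\psi_{j}}{h_{z,i,k}}}^{2}\dva(z)$ and interchange the (nonnegative) sums and integral by Tonelli to pull $\sum_{j}$ inside, obtaining $\norm{T}_{HS}^{2}=\sum_{i,k}\int_{\Bn}\int_{\Bn}\abs{M_{(i,k)}(z,w)}^{2}\dva(w)\dva(z)$. Finally, $\sum_{i,k}\abs{M_{(i,k)}(z,w)}^{2}$ is the squared Frobenius norm of $M(z,w)$, which is comparable to $\norm{M(z,w)}^{2}$ for any fixed matrix norm since $\Cd$ is finite dimensional; hence $\norm{T}_{HS}^{2}\simeq\norm{M}_{L^{2}(\Bn\times\Bn,\dva\times\dva)}^{2}<\infty$, so $T$ is Hilbert--Schmidt.

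There is no deep obstacle here; the substance is careful bookkeeping. The two points demanding attention are the placement of complex conjugates when recognizing each coordinate as a scalar Fourier coefficient (this is precisely what forces the definition $h_{z,i,k}=\overline{M_{(i,k)}(z,\cdot)}$ rather than $M_{(i,k)}(z,\cdot)$ itself), and the justification of every interchange of summation and integration, which is legitimate because all summands are nonnegative (Tonelli). Alternatively, one can bypass the explicit basis computation: under the identification $L_{\alpha}^{2}(\Bn;\Cd)\cong\bigoplus_{k=1}^{d}L_{\alpha}^{2}(\Bn;\C)$, the operator $T$ becomes the $d\times d$ block matrix whose $(i,k)$ block is the scalar integral operator with kernel $M_{(i,k)}\in L^{2}(\Bn\times\Bn)$; each block is Hilbert--Schmidt by the classical scalar theorem, and a finite block matrix of Hilbert--Schmidt operators is again Hilbert--Schmidt.
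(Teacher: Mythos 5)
Your proof is correct and follows essentially the same route as the paper: both arguments compute $\sum \norm{T\varphi}_{L_{\alpha}^{2}}^{2}$ over an orthonormal basis, reduce each term to Parseval's identity applied to the rows (respectively columns) of the kernel, interchange sum and integral by Tonelli, and finish with the equivalence of matrix norms on $\Cd$. The only cosmetic difference is that you work entrywise with a tensor--product basis $\{\psi_{j}e_{k}\}$ while the paper uses an arbitrary orthonormal basis of $L_{\alpha}^{2}(\Bn;\Cd)$ and the adjoint kernel $M_{z}^{*}e_{i}$.
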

\begin{proof}
Let $M^{*}(z,w)$ be the adjoint of the matrix $M(z,w)$.
Let \begin{math}\{\varphi_{n}\}\end{math} be an orthonormal basis for 
\begin{math}L_{\alpha}^{2}\end{math}, let \begin{math}e\in\Cd\end{math} with 
\begin{math}\|e\|_{2}=1\end{math} and let  \begin{math}M_{z}(w)=
M(z,w)\end{math}. Then there holds:
\begin{align*}
\|M_{z}^{*}e\|_{L_{\alpha}^{2}}^{2} 
&= \sum_{n=1}^{+\infty}\abs{\ip{M_{z}^{*}e}{\varphi_{n}}_{A_{\alpha}^{2}}}^{2}
\\&= \sum_{n=1}^{+\infty} \left|\int_{\Bn}\Langle M_{z}^{*}(w)e,\varphi_{n}(w)
\Rangle_{\C^{d}}\dva (w)\right|^{2}
\\&= \sum_{n=1}^{+\infty} \left|\int_{\Bn}
    \Langle e,M_{z}(w)\varphi_{n}(w)\Rangle_{\C^{d}}\dva (w)\right|^{2}
\\&= \sum_{n=1}^{+\infty} \left|
    \Langle e,\int_{\Bn}M_{z}(w)\varphi_{n}(w)\dva (w)
    \Rangle_{\C^{d}}\right|^{2}
\\&= \sum_{n=1}^{+\infty} \left|
    \Langle e,(T\varphi_{n})(z)\Rangle_{\C^{d}}\right|^{2}.
\end{align*}
Using this computation, there holds:
\begin{align*}
\sum_{n=1}^{+\infty}\|T\varphi_{n}\|_{L_{\alpha}^{2}}^{2} 
&= \sum_{n=1}^{+\infty}\int_{\Bn}\|(T\varphi_{n})(z)\|_{2}^{2}\dva (z)
\\&= \sum_{i=1}^{d}\int_{\Bn}\sum_{n=1}^{+\infty}\left|
    \Langle e_{i},(T\varphi_{n})(z)\Rangle_{\Cd}\right|^{2}\dva (z)
\\&= \sum_{i=1}^{d}\int_{\Bn}\|M_{z}^{*}e_{i}\|_{L_{\alpha}^{2}}^{2}\dva (z)
\\&= \sum_{i=1}^{d} \int_{\Bn}\int_{\Bn} 
    \|M^{*}(z,w)e_{i}\|_{2}^{2}\dva (w) \dva (z)
\\&\leq \sum_{i=1}^{d} \int_{\Bn}\int_{\Bn} 
    \|M(z,w)\|^{2}\dva (w) \dva (z)
\\&=d
< \infty.
\end{align*}
Therefore, $T$ is a Hilbert-Schmidt Operator.
\end{proof}

\begin{lem}\label{lem:uaunit}
For any \begin{math}z\in \Bn \text{, the operator } U_{z} \end{math} is
self-adjoint, idempotent, and isometric on \begin{math}A_{\alpha}^{2}\end{math}.
\end{lem}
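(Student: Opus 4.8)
The plan is to verify the three properties of $U_z$ directly from its definition $(U_z f)(w) = (f \circ \varphi_z)(w)\, k_z(w)$, relying on the change-of-variables formula \eqref{eqn:cov} and the kernel identity \eqref{eqn:eqOne}. Since $\varphi_z$ is an involution ($\varphi_z \circ \varphi_z = \mathrm{id}$), the natural strategy is to establish the isometry property first, deduce idempotency from the involution structure, and then obtain self-adjointness as a consequence of being a self-inverse isometry.

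**First I would** prove that $U_z$ is an isometry on $A_{\alpha}^2$. For $f \in A_{\alpha}^2$, I compute
\begin{align*}
\|U_z f\|_{L_{\alpha}^2}^2 = \int_{\Bn} \|(f\circ\varphi_z)(w)\|_2^2\, |k_z(w)|^2\, dv_\alpha(w).
\end{align*}
Applying the change-of-variables formula \eqref{eqn:cov} with the substitution $w \mapsto \varphi_z(w)$ (using $\varphi_z \circ \varphi_z = \mathrm{id}$), the factor $|k_z(w)|^2$ is exactly the Jacobian weight that converts this integral back to $\int_{\Bn} \|f(w)\|_2^2\, dv_\alpha(w) = \|f\|_{L_{\alpha}^2}^2$. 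This is the vector-valued version of the standard scalar computation, and the euclidean norm $\|\cdot\|_2$ passes through harmlessly since the kernel factor is scalar.

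**Next I would** establish idempotency. Computing $(U_z^2 f)(w) = (U_z f)(\varphi_z(w))\, k_z(w)$, I substitute $\varphi_z(\varphi_z(w)) = w$ to get $(U_z f)(\varphi_z(w)) = f(w)\, k_z(\varphi_z(w))$, so that
\begin{align*}
(U_z^2 f)(w) = f(w)\, k_z(\varphi_z(w))\, k_z(w) = f(w),
\end{align*}
where the last equality is precisely the identity \eqref{eqn:eqOne}. Thus $U_z^2 = I$. **Finally,** self-adjointness follows formally: $U_z$ is a bounded isometry (hence $U_z^* U_z = I$) that also satisfies $U_z^2 = I$, so $U_z^* = U_z^* U_z U_z = U_z$.

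**The main obstacle** is essentially bookkeeping rather than conceptual: I must make sure the scalar change-of-variables formula \eqref{eqn:cov}, stated there for scalar $f$, applies coordinate-wise to the vector-valued integrand. Because $\|f(w)\|_2^2 = \sum_{i=1}^d |\langle f(w), e_i\rangle_{\Cd}|^2$ and the Jacobian weight $|k_z(w)|^2$ is independent of the coordinate index, \eqref{eqn:cov} applies to each scalar component $w \mapsto \langle f(w), e_i\rangle_{\Cd}\, k_z(w)$ and the sum reassembles the euclidean norm. One subtlety worth checking is that $U_z$ indeed maps $A_{\alpha}^2$ into itself — i.e., $U_z f$ is holomorphic — which holds because $\varphi_z$ is a holomorphic automorphism and $k_z$ is holomorphic in $w$, so each component $(f\circ\varphi_z)\, k_z$ is a product of holomorphic scalar functions. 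I should verify this mapping property before invoking the Hilbert-space argument for self-adjointness.
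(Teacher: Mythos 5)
Your proposal is correct and follows essentially the same route as the paper: idempotency from $\varphi_z\circ\varphi_z=\mathrm{id}$ together with the identity \eqref{eqn:eqOne}, isometry from the change-of-variables formula \eqref{eqn:cov} applied coordinate-wise, and self-adjointness deduced formally from the first two properties (the paper writes $\Langle U_z f,g\Rangle=\Langle U_zU_zf,U_zg\Rangle=\Langle f,U_zg\Rangle$, which is the same algebra as your $U_z^*=U_z^*U_zU_z=U_z$). The only difference is the order of the steps and your added remark that $U_z$ preserves holomorphy, which the paper leaves implicit.
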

\begin{proof}
We first show idempotency: For \begin{math}f \in A_{\alpha}^{2}\end{math}, 
\begin{align}
U^{2}_zf &= U_z(U_{z}f)\notag
\\&= ((U_zf)\circ\varphi_z) (k_{z}) \notag
\\&= ((f \circ \varphi_z)  (k_{z}\circ \varphi_z) (k_{z})\notag
\\&= (f \circ \varphi_z \circ \varphi_z) (k_{z}\circ\varphi_z) 
    (k_{z}) \label{eqn:uz}
\\&= f 1 \notag
\\&= f. \notag
\end{align}
The equality in \eqref{eqn:uz} follows from \eqref{eqn:eqOne}.
Next we show isometry: 
\begin{align}
\norm{U_{z}f}_{A_{\alpha}^{2}}^{2} 
&=\int_{\Bn} \norm{(U_{z}f)(w)}_{\Cd}^{2}\dva (w)\notag
\\&=\int_{\Bn} \sum_{k=1}^{d} 
    \abs{\ip{(U_{z}f)(w)}{e_{k}}_{\Cd}}^{2}\dva (w)\notag
\\&= \sum_{k=1}^{d}\int_{\Bn}\abs{\ip{(f\circ \varphi_{z})(w)k_{z}(w)}
    {e_{k}}_{\Cd}}^{2}\dva (w)\notag
\\&= \sum_{k=1}^{d}\int_{\Bn} \left|\Langle ((f\circ \varphi_{z})(w) ,
    e_{k}\Rangle_{\Cd}\right|^{2}\abs{k_{z}(w)}^{2}\dva (w)
    \notag
\\&= \sum_{k=1}^{d}\int_{\Bn}\abs{\ip{f(w)}{e_{k}}_{\Cd}}^{2}\dva (w)
    \label{eqn:iso2}
\\&=\|f\|_{A_{\alpha}^{2}}^{2}\notag,
\end{align} 
where 
\eqref{eqn:iso2} is due to the change of variables formula \eqref{eqn:cov}.
To show that \begin{math}U_{z}\end{math} is self-adjoint, 
we note the two previous conditions imply that:
\begin{align*}
\Langle U_{z}f,g\Rangle_{A_{\alpha}^{2}} 
= \Langle(U_{z}U_{z})f,U_{z}g\Rangle_{A_{\alpha}^{2}}
= \Langle f,U_{z}g\Rangle_{A_{\alpha}^{2}}.
\end{align*}
\end{proof}

\section{Basic Lemmas} 
We begin by investigating the relationship between the Berezin Transform and the
maps \begin{math}\varphi_{z}\end{math}. 

\begin{lem}If \begin{math}T\in\mathcal{L}(L_{\alpha}^{2})\end{math} 
and \begin{math}z\in \Bn\end{math}, then
\begin{math}\widetilde{T}\circ\varphi_{z} = \widetilde{T_{z}}\end{math}.
\end{lem}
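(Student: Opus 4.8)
The plan is to unwind both Berezin transforms into inner products against the normalized kernels and then shift the two copies of $U_z$ onto those kernels using self-adjointness. Fix unit vectors $e,h\in\Cd$ and a point $w\in\Bn$. By the definition of $T_z=U_zTU_z$ and of the matrix-valued Berezin transform,
\[
\ip{\widetilde{T_z}(w)e}{h}_{\Cd}=\ip{U_zTU_z(k_w e)}{k_w h}_{A_{\alpha}^{2}}.
\]
The isometry and involution computations in Lemma \ref{lem:uaunit} use only the change of variables formula \eqref{eqn:cov} and the pointwise identity \eqref{eqn:eqOne}, so in fact $U_z$ is a self-adjoint unitary involution on all of $L_{\alpha}^{2}$; applying this gives $\ip{T\,U_z(k_w e)}{U_z(k_w h)}_{A_{\alpha}^{2}}$. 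Everything therefore reduces to identifying the single vector-valued function $U_z(k_w e)$ (and likewise $U_z(k_w h)$).

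Second, I would compute $U_z(k_w e)$ pointwise. Straight from the definition of $U_z$, $\bigl(U_z(k_w e)\bigr)(u)=k_w(\varphi_z(u))\,k_z(u)\,e$. The crux is the scalar kernel identity
\[
k_w(\varphi_z(u))\,k_z(u)=c(z,w)\,k_{\varphi_z(w)}(u),
\]
where $c(z,w)$ is \emph{independent of $u$} and \emph{unimodular}. Granting this, $U_z(k_w e)=c(z,w)\,k_{\varphi_z(w)}\,e$ and $U_z(k_w h)=c(z,w)\,k_{\varphi_z(w)}\,h$, so the two constants combine as $c(z,w)\overline{c(z,w)}=|c(z,w)|^{2}=1$ and
\[
\ip{T\,U_z(k_w e)}{U_z(k_w h)}_{A_{\alpha}^{2}}=\ip{T\,k_{\varphi_z(w)}e}{k_{\varphi_z(w)}h}_{A_{\alpha}^{2}}=\ip{\widetilde{T}(\varphi_z(w))e}{h}_{\Cd}.
\]
Since $e,h,w$ are arbitrary, this yields $\widetilde{T_z}=\widetilde{T}\circ\varphi_z$, as claimed.

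The main obstacle is establishing the kernel identity and the unimodularity of $c(z,w)$; this is where the real work sits, although it is pure algebra. Writing $s=n+1+\alpha$ and expanding with $k_z(u)=(1-|z|^{2})^{s/2}(1-\bar z u)^{-s}$, I would use the standard M\"obius relation
\[
1-\bar w\,\varphi_z(u)=\frac{(1-|z|^{2})\bigl(1-\overline{\varphi_z(w)}\,u\bigr)}{(1-\bar z u)\bigl(1-\overline{\varphi_z(w)}\,z\bigr)},
\]
obtained from the invariance identity for $1-\langle\varphi_z(\cdot),\varphi_z(\cdot)\rangle$ together with $\varphi_z\circ\varphi_z=\mathrm{id}$. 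Substituting, the factors $(1-\bar z u)^{s}$ cancel and the remaining $u$-dependence is exactly $(1-\overline{\varphi_z(w)}\,u)^{-s}$, which assembles into $k_{\varphi_z(w)}(u)$ times a factor free of $u$; that factor is $c(z,w)$.

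To verify $|c(z,w)|=1$, I would feed in the displayed identity $1-|\varphi_z(w)|^{2}=\dfrac{(1-|z|^{2})(1-|w|^{2})}{|1-\bar z w|^{2}}$ together with $1-\overline{\varphi_z(w)}\,z=\dfrac{1-|z|^{2}}{\overline{1-\bar z w}}$. After cancellation $c(z,w)$ collapses to $\bigl((1-\bar z w)/\overline{(1-\bar z w)}\bigr)^{s/2}$, the ratio of a complex number to its conjugate raised to a power, hence of modulus one. The only delicate point throughout is careful bookkeeping of the conjugations in the Hermitian form $\bar z w=\sum_i\bar z_i w_i$; once that is tracked consistently, the computation is routine.
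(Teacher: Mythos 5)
Your proof is correct, and its skeleton is the same as the paper's: both arguments reduce the claim to the single identity $U_z(k_w e)=c(z,w)\,k_{\varphi_z(w)}e$ with $c(z,w)$ a scalar of modulus one, after which the two unimodular constants cancel in the sesquilinear pairing and self-adjointness of $U_z$ converts $\ip{TU_z(k_we)}{U_z(k_wh)}_{A_{\alpha}^{2}}$ into $\ip{T_z(k_we)}{k_wh}_{A_{\alpha}^{2}}$. Where you differ is in how that identity is verified. The paper first proves $U_z(K_we)=\overline{k_z(w)}\,K_{\varphi_z(w)}e$ abstractly, by pairing against an arbitrary $f$ and using the reproducing property together with $U_z^{*}=U_z$; it then gets unimodularity of the normalizing factor for free from the fact that $U_z$ is an isometry, never computing the constant explicitly. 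You instead compute $\bigl(U_z(k_we)\bigr)(u)=k_w(\varphi_z(u))k_z(u)e$ pointwise and invoke the M\"obius identity for $1-\overline{w}\varphi_z(u)$ to cancel the $(1-\overline{z}u)^{n+1+\alpha}$ factors, then check $|c(z,w)|=1$ by hand using $1-|\varphi_z(w)|^2=(1-|z|^2)(1-|w|^2)|1-\overline{z}w|^{-2}$. Both routes are sound; the paper's duality-plus-isometry argument spares you the conjugation bookkeeping you flag as the delicate point, while your explicit computation identifies $c(z,w)=\bigl((1-\overline{z}w)/\overline{(1-\overline{z}w)}\bigr)^{(n+1+\alpha)/2}$ concretely (consistent with the paper's $\overline{k_z(w)}\,\norm{K_{\varphi_z(w)}}_{A_\alpha^2}/\norm{K_w}_{A_\alpha^2}$), which is occasionally useful elsewhere. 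Your side remark that $U_z$ is self-adjoint on all of $L_{\alpha}^{2}$, not just $A_{\alpha}^{2}$, is a worthwhile point of care given that the lemma is stated for $T\in\mathcal{L}(L_{\alpha}^{2})$.
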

\begin{proof}
Suppose that \begin{math}T\in \mathcal{L}(L_{\alpha}^{2})
\end{math} and \begin{math}z,w\in \Cn\end{math}, then there holds
\begin{align*}
U_{z}(K_{w}e)=(\overline{k_{z}(w))} 
(K_{\varphi(w)}^{(\alpha)})e.
\end{align*}
Indeed, if \begin{math} f 
\end{math} is in \begin{math}L_{\alpha}^{2}\end{math} then there holds, for 
\begin{math}\norm{e}\in\Cd\end{math}:
\begin{align*}
\ip{f}{U_{z}(K_{w}e)}_{A_{\alpha}^2} 
&=\ip{U_{z}f}{K_{w}e}_{A_{\alpha}^2}
\\&=\ip{(U_{z}f)(w)}{e}_{\C^{d}}
\\&=\ip{(f\circ\varphi_{z})(w) \times k_{z}(w)}{e}_{\C^{d}}
\\&=\ip{(f\circ\varphi_{z})(w)}{\overline{k_{z}(w)}e}_{\C^{d}}
\\&=\ip{f}{\overline{k_{z}(w)}
    {K_{\varphi_{z}(w)}e}}_{A_{\alpha}^{2}},
\end{align*}
which implies the claim. 
By writing this equality as:
\begin{align*}
U_{z}(K_{w}e) 
&= \norm{K_{\varphi_{z}(w)}}_{A_{\alpha}^{2}}
    \norm {K_{\varphi_{z}(w)}}_{A_{\alpha}^{2}}^{-1}(U_{z}
    K_{w}e)
\\&= \norm {K_{\varphi_{z}(w)}}_{A_{\alpha}^{2}}
    \overline{k_{z}(w)}
    \lp\norm{K_{\varphi_{z}(w)}}_{A_{\alpha}^{2}}^{-1}
    K_{\varphi_{z}(w)}\rp e.
\end{align*}
It follows that
\begin{align}\label{eqn:equivU}
U_{z}(K_{w}e)
=\left(\norm{K_{w}}_{A_{\alpha}^{2}}^{-1}
    \norm{K_{\varphi_{z}(w)}}_{A_{\alpha}^{2}}
    \overline{K_{z}(w)}\right)
    \left(k_{\varphi_{z}(w)}\right)e.
\end{align}
Since $U_{z}$ is an isometry, the $A_{\alpha}^{2}$ norm of 
$K_{w}=\norm{K_{w}}_{A_{\alpha}^{2}}k_w$ is equal to the $A_{\alpha}^{2}$ 
norm of
the extreme right side of
of \eqref{eqn:equivU}. Computing this norm and using the fact that 
\begin{math}\left\langle k_{w}e,k_{w}e\right
\rangle_{A_{\alpha}^{2}}=
\left\langle k_{\varphi_{z}(w)}e,k_{\varphi_{z}(w)}e
\right\rangle_{A_{\alpha}^{2}} = 
1\end{math}, gives:
\begin{align*}
\norm{K_{w}}_{A_{\alpha}^{2}}^{-1}
    \norm{K_{\varphi_{z}(w)}}_{A_{\alpha}^{2}}
    \abs{\overline{K_{z}(w)}}
=\norm{K_{w}}_{A_{\alpha}^{2}}.
\end{align*}
Therefore there holds:
\begin{align}\label{eqn:uz2}
\norm{K_{w}}_{A_{\alpha}^{2}}^{-1}
    \norm{K_{\varphi_{z}(w)}}_{A_{\alpha}^{2}}
    \abs{\overline{k_{z}(w)}}
=1.
\end{align} 
We use this fact to make the following computation,
for \begin{math}\|e_{1}\|_{2}=\|e_{2}\|_{2}=1\end{math}:
\begin{align}\label{eqn:uz3}
\ip{\widetilde{T}\circ\varphi_{z}(w)e_{1}}{e_{2}}_{\C^{d}}
&= \ip{\widetilde{T}(\varphi_{z}(w))e_{1}}{e_{2}}_{\C^{d}}\notag
\\&=\ip{T(k_{\varphi_{z}(w)}e_{1})}
   {k_{\varphi_{z}(w)}e_{2}}_{A_{\alpha}^{2}}\notag
\\&=\left(\norm{K_{w}}_{A_{\alpha}^{2}}^{-1}
    \norm{K_{\varphi_{z}(w)}}_{A_{\alpha}^{2}}
    \abs{K_{z}(w)}\right)^{-2}  
    \ip{TU_{z}(K_{w}e_{1})}{U_{z}
    (K_{w}e_{2})}_{A_{\alpha}^{2}}\notag
\\&=\left(\norm{K_{w}}_{A_{\alpha}^{2}}^{-2}
    \norm{K_{\varphi_{z}(w)}}_{A_{\alpha}^{2}}
    \abs{K_{w}(z)}\right)^{-2}
    \ip{TU_{z}(k_{w}e_{1})}{U_{z}
    (k_{w}e_{2})}_{A_{\alpha}^{2}}\notag
\\&=\left(\norm{K_{w}}_{A_{\alpha}^{2}}^{-1}
    \norm{K_{\varphi_{z}(w)}}_{A_{\alpha}^{2}}
    \abs{k_{w}(z)}\right)^{-2}
    \ip{TU_{z}(k_{w}e_{1})}{U_{z}
    (k_{w}e_{2})}_{A_{\alpha}^{2}}\notag
\\&=\ip{TU_{z}(k_{w}e_{1})}{U_{z}
    (k_{w}e_{2})}_{A_{\alpha}^{2}}
\\&=\ip{U_{z}TU_{z}(k_{w}e_{1})}
    {k_{w}e_{2}}_{A_{\alpha}^{2}}\notag
\\&=\ip{T_{z}(k_{w}e_{1})}{k_{w}
    e_{2}}_{A_{\alpha}^{2}}\notag
\\&=\ip{\widetilde{T_{z}}e_{1}}{e_{2}}_{\C^{d}}.\notag
\end{align}
In the equality in \eqref{eqn:uz3} we used \eqref{eqn:uz2}.
This shows that \begin{math}\widetilde{T}\circ\varphi_{z}=\widetilde{T_{z}}
\end{math} as maps on
\begin{math}\cd\end{math}.
\end{proof}
\begin{lem}\label{lem:rai3.2}
For every \begin{math}u\in L_{M_{d}}^{\infty}\end{math} 
and for every \begin{math}z\in \Bn\end{math}, there holds: 
\begin{align*}
U_{z}T_{u}U_{z}=T_{u\circ\varphi_{z}}
\end{align*}.
\end{lem}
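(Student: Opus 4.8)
The plan is to exploit the factorization $T_u = P_\alpha M_u$ together with the fact, recorded in Lemma~\ref{lem:uaunit}, that $U_z$ is a self-adjoint involution: conjugation by $U_z$ will be shown to act separately on the multiplication operator $M_u$ and on the Bergman projection $P_\alpha$. First I would observe that the two computations in Lemma~\ref{lem:uaunit} — isometry via the change of variables \eqref{eqn:cov}, and idempotency via \eqref{eqn:eqOne} together with $\varphi_z\circ\varphi_z=\mathrm{id}$ — nowhere use holomorphicity of $f$, so $U_z$ extends to a self-adjoint unitary involution on all of $L_\alpha^2$. This extension is the point that needs the most care, since Lemma~\ref{lem:uaunit} is literally stated only on $A_\alpha^2$, yet I must conjugate $P_\alpha$, whose natural domain is $L_\alpha^2$.

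The first main step computes $U_z M_u U_z = M_{u\circ\varphi_z}$ by directly unwinding the definition of $U_z$. For $f\in L_\alpha^2$ and $w\in\Bn$,
\begin{align*}
(U_z M_u U_z f)(w)
&= \big((M_u U_z f)\circ\varphi_z\big)(w)\, k_z(w)\\
&= u(\varphi_z(w))\,(U_z f)(\varphi_z(w))\, k_z(w)\\
&= u(\varphi_z(w))\, f(\varphi_z(\varphi_z(w)))\, k_z(\varphi_z(w))\, k_z(w)\\
&= (u\circ\varphi_z)(w)\, f(w),
\end{align*}
where the final equality uses $\varphi_z\circ\varphi_z=\mathrm{id}$ and the identity \eqref{eqn:eqOne}, i.e. $k_z(\varphi_z(w))k_z(w)\equiv 1$. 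This is precisely $M_{u\circ\varphi_z}f$.

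The second main step is to show $U_z P_\alpha U_z = P_\alpha$. Because $U_z$ is a self-adjoint unitary on $L_\alpha^2$ that maps the closed subspace $A_\alpha^2$ into itself, self-adjointness forces it to preserve $(A_\alpha^2)^\perp$ as well: if $g\perp A_\alpha^2$ and $h\in A_\alpha^2$, then $\ip{U_z g}{h}_{L_\alpha^2}=\ip{g}{U_z h}_{L_\alpha^2}=0$ since $U_z h\in A_\alpha^2$. Hence $U_z$ commutes with the orthogonal projection $P_\alpha$, and using $U_z^2=I$ we obtain $U_z P_\alpha U_z = P_\alpha U_z^2 = P_\alpha$.

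Finally I would combine the two steps. Inserting $U_z^2=I$ and using that $M_u$, $P_\alpha$, and $U_z$ all act on $L_\alpha^2$,
\begin{align*}
U_z T_u U_z = U_z P_\alpha M_u U_z = (U_z P_\alpha U_z)(U_z M_u U_z) = P_\alpha\, M_{u\circ\varphi_z} = T_{u\circ\varphi_z},
\end{align*}
which is the claimed identity. The only genuinely delicate point, as flagged above, is justifying that $U_z$ may be treated as a self-adjoint involution on the full space $L_\alpha^2$ and not merely on $A_\alpha^2$; once that is granted, the remainder is bookkeeping built on \eqref{eqn:eqOne} and the self-adjointness of $P_\alpha$.
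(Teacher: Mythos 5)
Your proof is correct, but it follows a genuinely different route from the paper's. The paper first uses idempotency of $U_z$ to reduce the claim to $T_uU_z = U_zT_{u\circ\varphi_z}$, and then verifies this identity weakly: it pairs both sides against the reproducing elements $k_we$, writes out the resulting integrals, and matches them via the substitution $\eta=\varphi_z(\xi)$, the change-of-variables formula \eqref{eqn:cov}, and the identity \eqref{eqn:eqOne}. You instead work at the level of the operator algebra: you conjugate the two factors of $T_u=P_\alpha M_u$ separately, proving $U_zM_uU_z=M_{u\circ\varphi_z}$ by a pointwise computation and $U_zP_\alpha U_z=P_\alpha$ by observing that a self-adjoint involution preserving $A_\alpha^2$ must also preserve $(A_\alpha^2)^\perp$ and hence commutes with the orthogonal projection. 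Your decomposition is more conceptual and more reusable --- it exhibits $T\mapsto U_zTU_z$ as an automorphism acting symbol-wise on the whole multiplier picture, and each of the two sub-identities is checked by pure bookkeeping rather than an integral manipulation. The price, which you correctly identify and discharge, is that you must first extend $U_z$ to a self-adjoint unitary involution on all of $L_\alpha^2$ (the paper states Lemma~\ref{lem:uaunit} only on $A_\alpha^2$); your observation that neither the idempotency computation \eqref{eqn:eqOne} nor the isometry computation via \eqref{eqn:cov} uses holomorphicity is exactly what is needed, and polarization upgrades the norm identity to preservation of inner products, giving self-adjointness on $L_\alpha^2$. The paper's weak formulation against $k_we$ sidesteps this extension issue entirely, at the cost of a longer and less transparent chain of integral identities. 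Both arguments are valid; since $u$ is matrix-valued you are right to keep $u(\varphi_z(w))$ acting on the left of the vector $f(w)$ while the scalar factor $k_z(\varphi_z(w))k_z(w)$ commutes freely, and the final insertion of $U_z^2=I$ is sound.
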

\begin{proof}
Since \begin{math}U_{z}\end{math} is idempotent, it is enough to 
prove that \begin{math}T_{u}U_{z}=
U_{z}T_{u\circ\varphi_{z}}\end{math}. To this end, we compute 
\begin{math}T_{u}U_{z}\end{math}. Let 
\begin{math}f\end{math} be in \begin{math}A_{\alpha}^{2}\end{math}
and \begin{math}e\in\Cd\end{math}, then:
\begin{align*}
\ip{T_{u}U_{z}f}{k_{w}e}_{A_{\alpha}^{2}} 
&=\ip{T_{u} \lp\lp f\circ\varphi_{z}\rp k_{z}\rp}
    {k_{w}e}_{A_{\alpha}^{2}}
\\&=\ip{PM_{u}\lp\lp f\circ\varphi_{z}\rp k_{z}\rp}
    {k_{w}e}_{A_{\alpha}^{2}}
\\&= \ip{ u\lp f\circ\varphi_{z}\rp k_{z}}
    {k_{w}e}_{A_{\alpha}^{2}}
\\&=\int_{\Bn}\ip{u(\eta)\lp f\circ\varphi_{z}\rp(\eta)k_{z}(\eta)}
    {k_{w}(\eta)e}_{\C^{d}}\dva (\eta).
\end{align*}
Now, we calculate \begin{math}U_{z}T_{u\circ\varphi_{z}}\end{math}. 
For \begin{math}e\in\Cd\end{math}:
\begin{align*}
\ip{U_{z}T_{u\circ\varphi_{z}}f}{k_{w} e}_{A_{\alpha}^{2}} 
&=\ip{T_{u\circ\varphi_{z}}f}{U_{z}
    \lp k_{w}e\rp}_{A_{\alpha}^{2}}
\\&=\ip{PM_{u\circ\varphi_{z}}f}{U_{z}\lp 
    k_{w}e\rp}_{A_{\alpha}^{2}} 
\\&= \ip{\lp u\circ\varphi_{z}\rp f}{\lp 
    k_{w}\circ\varphi_{z}\rp k_{z} e}_{A_{\alpha}^{2}}
\\&=\int_{\Bn} \ip{\lp u\circ\varphi_{z}\rp\lp\eta\rp 
    f\lp\eta\rp}{k_{z}\lp\eta\rp\lp 
    k_{w}\circ\varphi_{z}\rp\lp\eta\rp e}_{\C^{d}} \dva\lp\eta\rp 
    =: \mathcal{A}.
\end{align*}

Make the substitution \begin{math}\eta = \varphi_{z}\lp\xi\rp\end{math}. 
Then there holds, for \begin{math}e\in\Cd\end{math}, using Lemma \ref{eqn:cov},
and the fact that $k_{z}\circ\varphi_{z}(\xi)k_{z}(\xi)
\equiv 1$, 
\begin{align*}
\mathcal{A} 
&=\int_{\Bn} \ip{u(\xi)(f\circ\varphi_{z})(\xi)}
    {(k_{z}\circ\varphi_{z})(\xi)k_{w}(\xi)e}_{\C^{d}}
    \abs{k_{z}(\xi)}^{2}\dva (\xi)
\\&= \int_{\Bn} \ip{u(\xi)(f\circ\varphi_{z})(\xi)}
    {(k_{z}\circ\varphi_{z})(\xi)k_{z}(\xi)
    k_{w}(\xi) \overline{k_{z}(\xi)}e}_{\C^{d}}\dva (\xi)
\\&= \int_{\Bn} \ip{u(\xi)(f\circ\varphi_{z})(\xi)}
    {k_{w}(\xi)\overline{k_{z}(\xi)}e}_{\C^{d}}\dva (\xi)
\\&= \int_{\Bn} \ip{k_{z}(\xi)u(\xi)(f\circ\varphi_{z})(\xi)}
    {k_{w}(\xi)e}_{\C^{d}}\dva (\xi).
\end{align*}
This gives, \begin{math} 
\ip{T_{u}U_{z}f}{k_{w}e}_{A_{\alpha}^{2}} 
=\ip{U_{z}T_{u\circ\varphi_{z}}f}
{k_{w}e}_{A_{\alpha}^{2}}\end{math} 
for every \begin{math}w\in \Bn\end{math}
and \begin{math}e\in\Cd\end{math}. This completes the proof. 
\end{proof}

Before going on, we introduce a new operator on \begin{math}A_{\alpha}^{2}
\end{math}: 
\begin{math}(U_{\mathcal{R}}f)(w)=f(-w)\end{math}. Let
\begin{align*}
\mathcal{J}_{c,t}(z)
=\int_{\Bn}\frac{(1-|w|)^{t}\dva (w)}{|1-z\overline{w}|^{n+1+t+c}}.
\end{align*}
By Lemma \ref{lem:intEst}, for \begin{math}c < 0\end{math} and 
\begin{math}t > -1\end{math}, the 
function \begin{math}\mathcal{J}_{c,t}\end{math} is bounded on 
\begin{math}\Bn\end{math}. 
We will state a proposition that will be use in conjunction with Schur's Test 
later on. The proof can be easily deduced from the proof in \cite{RAI} and is 
omitted.  
\begin{lem}\label{lem:raiProp3.3}
Given \begin{math}p\in \R \text{ with }0<p-1<(n+1)^{-1}, \text{ and }T\in 
\mathcal{L}(A_{\alpha}^{2})
\text{ and }e\in\Cd\end{math}, 
then
\begin{align*}
\int_{\Bn} 
    \norm{U_{\mathcal{R}}TU_{\mathcal{R}}(K_{z}e)(w)}_{2}
\norm{K_{w}}_{A_{\alpha}^{2}}^{\epsilon}\dva (w)
&\leq \norm{K_{z}}_{A_{\alpha}^{2}}^{\epsilon}\lp\sup_{z\in \Bn}
    \norm{T_{-z}e}_{A_{\alpha}^{q}}\rp 
\sup_{z\in \Bn}
|\mathcal{J}_{a,b}(z)|^{1/p}
\\ \int_{\Bn} \|U_{\mathcal{R}}TU_{\mathcal{R}}
(K_{z}e)(w)\|_{2}\|K_{w}
\|_{A_{\alpha}^{2}}^{\epsilon}\dva (z)
&\leq\|K_{z}\|_{A_{\alpha}^{2}}^{\epsilon}
\lp\sup_{w\in \Bn}\|T_{-w}e\|_{A_{\alpha}^{q}}\rp 
\sup_{w\in \Bn}
|\mathcal{J}_{a,b}(w)|^{1/p}
\end{align*}
where: \begin{math}2(p-1)/p < \epsilon < 2/(n+1)2, 
\text{ }a=(p-1)(n+1)-(n+1)\epsilon p/2 \text{ and }
b=-(n+1)\epsilon p/2 \text{ and }p^{-1}+q^{-1}=1\end{math}. Moreover, the 
quantity, 
\begin{align*}
\sup_{z\in \Bn}|\mathcal{J}_{a,b}(z)|^{1/p}
\end{align*}
is finite. 
\end{lem}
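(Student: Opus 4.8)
The plan is to reduce both displayed inequalities to a single pointwise identity for $U_{\mathcal{R}}TU_{\mathcal{R}}(K_z e)$ and then estimate the resulting integral by Hölder's inequality, the change of variables \eqref{eqn:cov}, and the growth estimate of Lemma \ref{lem:intEst}.

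First I would record the elementary properties of $U_{\mathcal{R}}$: since $\abs{-w}=\abs{w}$, the map $(U_{\mathcal{R}}f)(w)=f(-w)$ is a self-adjoint isometric involution on $A_{\alpha}^{2}$ and on $A_{\alpha}^{q}$, and a direct computation gives $U_{\mathcal{R}}(K_z e)=K_{-z}e$. Using that $U_{-z}$ is idempotent (Lemma \ref{lem:uaunit}), that the constant function $k_0\equiv 1$, and the identity \eqref{eqn:eqOne}, one obtains $T(K_{-z}e)=\norm{K_{-z}}_{A_{\alpha}^{2}}U_{-z}(T_{-z}e)$, where $T_{-z}e$ denotes $T_{-z}$ applied to the constant function $e$. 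Combining this with the symmetries $\varphi_{-z}(-w)=-\varphi_z(w)$ and $k_{-z}(-w)=k_z(w)$ (so that $U_{\mathcal{R}}U_{-z}=U_zU_{\mathcal{R}}$) and with $\norm{K_{-z}}=\norm{K_z}$ yields the key identity
\begin{equation*}
U_{\mathcal{R}}TU_{\mathcal{R}}(K_z e)(w)=\norm{K_z}_{A_{\alpha}^{2}}\,k_z(w)\,(T_{-z}e)(-\varphi_z(w)).
\end{equation*}

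With this identity in hand, taking $\norm{\cdot}_2$ and inserting the factor $\norm{K_w}_{A_{\alpha}^{2}}^{\epsilon}$, the first integral becomes $\norm{K_z}\int_{\Bn}\norm{(T_{-z}e)(-\varphi_z(w))}_2\,\abs{k_z(w)}\,\norm{K_w}^{\epsilon}\,\dva(w)$. I would split $\abs{k_z(w)}=\abs{k_z(w)}^{2/q}\abs{k_z(w)}^{1-2/q}$ and apply Hölder with exponents $q$ and $p$, grouping $\abs{k_z(w)}^{2/q}$ with the factor $\norm{(T_{-z}e)(-\varphi_z(w))}_2$. The change of variables \eqref{eqn:cov} then turns $\int\norm{(T_{-z}e)(-\varphi_z(w))}_2^{q}\abs{k_z(w)}^{2}\,\dva(w)$ into $\int\norm{(U_{\mathcal{R}}T_{-z}e)(\eta)}_2^{q}\,\dva(\eta)$, which by the finite-dimensional equivalence of $\norm{\cdot}_2$ and $\norm{\cdot}_q$ on $\Cd$ is comparable to $\norm{T_{-z}e}_{A_{\alpha}^{q}}^{q}$; taking the supremum produces the factor $\sup_{z\in\Bn}\norm{T_{-z}e}_{A_{\alpha}^{q}}$. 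The remaining $p$-factor is $\left(\int\abs{k_z(w)}^{(1-2/q)p}\norm{K_w}^{\epsilon p}\,\dva(w)\right)^{1/p}$, and since $k_z=\norm{K_z}^{-1}K_z$, with $\abs{K_z(w)}=\abs{1-\overline{z}w}^{-(n+1+\alpha)}$ and $\norm{K_w}^{\epsilon p}\simeq(1-\abs{w}^2)^{-(n+1+\alpha)\epsilon p/2}$, this integral is, up to a power of $\norm{K_z}_{A_{\alpha}^{2}}$, an integral of the form $\mathcal{J}_{a,b}(z)$ with exponents $a,b$ determined by $p$ and $\epsilon$.

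Finally I would invoke Lemma \ref{lem:intEst}: for the indicated ranges of $p$ and $\epsilon$ the exponents $a,b$ place $\mathcal{J}_{a,b}$ in the bounded regime (the case $s<n+1+t$), so that $\sup_{z\in\Bn}\abs{\mathcal{J}_{a,b}(z)}^{1/p}<\infty$; collecting the powers of $\norm{K_z}_{A_{\alpha}^{2}}$ pulled out along the way—and using $\norm{K_z}\geq 1$ together with $\epsilon>2(p-1)/p$ to replace the sharp exponent by $\epsilon$—gives the first inequality. The second inequality follows from the same pointwise identity by integrating in $z$ rather than in $w$, which replaces $\mathcal{J}_{a,b}(z)$ by $\mathcal{J}_{a,b}(w)$ and $\sup_z\norm{T_{-z}e}_{A_{\alpha}^{q}}$ by $\sup_w\norm{T_{-w}e}_{A_{\alpha}^{q}}$. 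I expect the main obstacle to be purely bookkeeping: carrying the Jacobian $\abs{k_z}^2$ correctly through \eqref{eqn:cov} so that the Hölder $q$-factor becomes a genuine $A_{\alpha}^{q}$ norm, and calibrating the Hölder split and the constants $\epsilon,a,b$ so that the leftover kernel integral lands in the convergent range of Lemma \ref{lem:intEst} while the surplus powers of $\norm{K_z}$ assemble into $\norm{K_z}^{\epsilon}$.
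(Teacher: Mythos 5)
The paper gives no proof of this lemma at all---it is deferred to \cite{RAI} with the remark that the proof ``can be easily deduced''---so your reconstruction has to stand on its own. Your key identity $U_{\mathcal{R}}TU_{\mathcal{R}}(K_{z}e)(w)=\norm{K_z}_{A_{\alpha}^{2}}\,k_z(w)\,(T_{-z}e)(-\varphi_z(w))$ is correct and is the right starting point, and the H\"older split plus change of variables for the $q$-factor is the right skeleton for the first inequality. But your final step, as described, fails. With your split the $p$-factor is $\lp\int_{\Bn}\abs{k_z(w)}^{2-p}\norm{K_w}_{A_\alpha^2}^{\epsilon p}\dva(w)\rp^{1/p}$ (note $(1-2/q)p=2-p$), which after extracting the normalization $(1-\abs{z}^2)^{(n+1+\alpha)(2-p)/2}$ is exactly $F_{s,t}$ of Lemma \ref{lem:intEst} with $s=(n+1+\alpha)(2-p)$ and $t=\alpha-(n+1+\alpha)\epsilon p/2$. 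Then $s-(n+1+t)=(n+1+\alpha)\lp\epsilon p/2-(p-1)\rp$, which is \emph{strictly positive} precisely because $\epsilon>2(p-1)/p$. So this integral lies in the growth regime of Lemma \ref{lem:intEst}, not the bounded one: it blows up like $(1-\abs{z}^2)^{-(n+1+\alpha)(\epsilon p/2-(p-1))}$, and it is not (up to $z$-dependent factors) the bounded $\mathcal{J}_{a,b}$ of the statement, whose numerator and denominator exponents are linked differently. The argument is rescued by using the growth rate instead of boundedness: adding the three powers of $(1-\abs{z}^2)$ (from the prefactor $\norm{K_z}$, from the normalization of $\abs{k_z}^{2-p}$, and from the growth of the integral) gives exactly $-(n+1+\alpha)\epsilon/2$, i.e.\ exactly $\norm{K_z}_{A_\alpha^2}^{\epsilon}$. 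In particular there is no ``surplus power'' of $2(p-1)/p$ to absorb using $\norm{K_z}\geq 1$; had you actually checked the boundedness condition $s<n+1+t$ you would have found it requires $\epsilon<2(p-1)/p$, the opposite of the hypothesis.

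The second inequality also does not follow ``by integrating in $z$ rather than in $w$.'' There $w$ is fixed and $z$ is the variable of integration, so $T_{-z}e$ now changes with the integration variable and the change-of-variables step that produced $\norm{T_{-z}e}_{A_{\alpha}^{q}}$ is unavailable. You need a different handling of the $q$-part: bound $\norm{(T_{-z}e)(-\varphi_z(w))}_{2}$ pointwise by $(1-\abs{\varphi_z(w)}^2)^{-(n+1+\alpha)/q}\sup_{z}\norm{T_{-z}e}_{A_{\alpha}^{q}}$ via Proposition \ref{prop:vvsoh2.24}, rewrite $1-\abs{\varphi_z(w)}^{2}$ using the displayed identity in Section 2, and then the remaining $z$-integral is again a case of Lemma \ref{lem:intEst}; alternatively, apply the first inequality to $T^{*}$. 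The latter is evidently what is intended, since when the paper invokes this lemma inside the Schur test it writes $\sup_{z}\norm{T_{-z}^{*}e_{i}}_{A_{\alpha}^{q}}$ for this direction---a sign that the superficially symmetric second display (which, as printed, also has $\norm{K_w}$ and $\norm{K_z}$ on the wrong sides) is not literally obtained by swapping the roles of $z$ and $w$.
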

\begin{lem} \label{lem:only}
Let \begin{math}T\end{math} be an operator in 
\begin{math}\mathcal{L}(A_{\alpha}^{2})\end{math} which can be 
written as \begin{math}T=\sum_{j=1}^{m}\Pi_{k=1}^{m_{j}}T_{u_{j,k}}\end{math}, 
where \begin{math}u_{j,k}\in L_{M^{d}}^{\infty}\end{math}. Then, for every 
\begin{math}1<p<\infty, \text{ }
\sup_{\|e\|_{2}=1}\sup_{z\in \Bn}\|T_{z}e\|_{A_{\alpha}^{p}} < 
\infty\end{math}.
\end{lem}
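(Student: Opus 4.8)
The plan is to rewrite each conjugate $T_z = U_zTU_z$ as a finite sum of finite products of Toeplitz operators whose symbols have $L^\infty$ norms independent of $z$, and then to bound those products in the $A_\alpha^p$ operator norm uniformly. Since $U_z$ is idempotent and isometric on $A_\alpha^2$ by Lemma \ref{lem:uaunit}, I would insert $U_zU_z = I$ between consecutive factors of each product and apply Lemma \ref{lem:rai3.2} ($U_zT_uU_z = T_{u\circ\varphi_z}$) to get
\begin{align*}
T_z = U_z\Bigl(\sum_{j=1}^m\prod_{k=1}^{m_j}T_{u_{j,k}}\Bigr)U_z
= \sum_{j=1}^m\prod_{k=1}^{m_j}T_{u_{j,k}\circ\varphi_z}.
\end{align*}
The decisive gain is that $\varphi_z$ is an involutive automorphism of $\Bn$, hence a diffeomorphism preserving Lebesgue null sets, so $\|u_{j,k}\circ\varphi_z\|_\infty = \|u_{j,k}\|_\infty$ for every $z$: the symbols occurring in $T_z$ are uniformly bounded in $L^\infty$.

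Next I would show that a Toeplitz operator with $L^\infty$ matrix symbol is bounded on $A_\alpha^p$ with a norm controlled only by $\|u\|_\infty$. Writing $T_u = P_\alpha M_u$ and noting $\|M_uf\|_{L_\alpha^p}\le \|u\|_\infty\|f\|_{L_\alpha^p}$ pointwise, this reduces to the boundedness of the Bergman projection $P_\alpha$ on $L_\alpha^p(\Bn;\Cd)$ for $1<p<\infty$. In the vector-valued setting this follows coordinatewise from the classical scalar result in \cite{Zhu}, or directly from the vector-valued Schur test (Lemma \ref{eqn:vvschur}) applied to the kernel $(1-\overline z w)^{-(n+1+\alpha)}I_d$ with test function $h(w) = (1-|w|^2)^{-\sigma}$ for a suitable exponent $\sigma$, the needed integral bounds being furnished by Lemma \ref{lem:intEst}. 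Setting $C_p := \|P_\alpha\|_{L_\alpha^p\to L_\alpha^p}$ gives $\|T_u\|_{A_\alpha^p\to A_\alpha^p}\le C_p\|u\|_\infty$.

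Combining the two steps, the $A_\alpha^p$ operator norm of $T_z$ is bounded uniformly in $z\in\Bn$ by
\begin{align*}
\|T_z\|_{A_\alpha^p\to A_\alpha^p}
\le \sum_{j=1}^m\prod_{k=1}^{m_j} C_p\,\|u_{j,k}\|_\infty =: C < \infty .
\end{align*}
I would then apply this to the constant function $w\mapsto e$, which lies in $A_\alpha^p$ for every $p$; since $\dva$ is a probability measure and all norms on $\Cd$ are equivalent, $\|e\|_{A_\alpha^p}\lesssim\|e\|_2 = 1$. The displayed formula for $T_z$ is an operator identity on $A_\alpha^2$, and evaluated on $e\in A_\alpha^2\cap A_\alpha^p$ it yields the same holomorphic function whether read in $A_\alpha^2$ or $A_\alpha^p$ (constants are bounded holomorphic functions, on which every Toeplitz operator acts by the same integral formula), so $\|T_ze\|_{A_\alpha^p}\le C\|e\|_{A_\alpha^p}\lesssim 1$ uniformly in $z$ and in unit vectors $e$, which is exactly the assertion.

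The only genuinely analytic ingredient is the $L^p$-boundedness of $P_\alpha$; this is where I expect the real work, and it is supplied either by \cite{Zhu} or by Lemmas \ref{eqn:vvschur} and \ref{lem:intEst}. Everything else is bookkeeping with the properties of $U_z$ (Lemmas \ref{lem:uaunit} and \ref{lem:rai3.2}) and the automorphism invariance of the $L^\infty$ norm, and it is precisely at this bookkeeping step that the hypothesis ``$T$ is a finite sum of finite products of Toeplitz operators'' is used, as advertised.
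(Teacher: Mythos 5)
Your proposal is correct and follows essentially the same route as the paper: conjugate each factor via Lemma \ref{lem:rai3.2} to get $T_z=\sum_j\prod_k T_{u_{j,k}\circ\varphi_z}$, use the automorphism invariance $\|u\circ\varphi_z\|_\infty=\|u\|_\infty$, and reduce to the $L^p_\alpha$-boundedness of $P_\alpha$ to bound each factor's operator norm uniformly in $z$. Your write-up is in fact slightly more careful than the paper's (which bounds a product of operators applied to $e$ by a product of vector norms in a notationally loose way, and does not comment on evaluating the $A_\alpha^2$ operator identity on the constant function $e$ in the $A_\alpha^p$ norm), but these are presentational refinements of the identical argument.
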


\begin{proof}
We can assume that \begin{math}T=\Pi_{k=1}^{m}T_{u_{j}}\end{math}. Using Lemma 
\ref{lem:rai3.2}, we have that 
\begin{math}T_{z}=\Pi_{k=1}^{m}T_{u_{j}\circ\varphi_{z}}\end{math}. 
Since \begin{math}P_{\alpha}\end{math} is 
bounded from \begin{math}L_{p}^{\alpha}\to A_{p}^{\alpha}\end{math}, we have 
\begin{math}\|P_{\alpha}f\|_{A_{p}^{\alpha}}\lesssim 
\|f\|_{L_{\alpha}^{p}}\end{math}. Therefore, 
since \begin{math}\|u\circ\varphi_{z}\|_{\infty}=\|u\|_{\infty}
\end{math}, we have \begin{math}
\|T_{u\circ\varphi_{z}}f\|_{L_{\alpha}^2}\lesssim
\|u\|_{\infty}\|f\|_{L_{\alpha}^2}\end{math}.
This implies 
that \begin{math} \|T_{z}e\|_{L_{\alpha}^p}\leq \Pi_{k=1}^{m}
\|T_{u_{k}\circ\varphi_{z}}e\|_{L_{\alpha}^p} 
\lesssim \Pi_{k=1}^{m}\|u_{k}\|_{\infty}\end{math}. Since the right hand side of
this 
estimate is independent of \begin{math}z\end{math} (the implied constant depends
only on \begin{math}p\end{math}), we are done. 
\end{proof}

\section{The Main Theorem} 
For convenience, we remind the reader of the main theorem. 
\begin{thm}
Let \begin{math}T\end{math} be an operator in 
\begin{math}\mathcal{L}(A_{\alpha}^{2})\end{math} which can be 
written as 
\begin{align*}
T=\sum_{j=1}^{m}\prod_{k=1}^{m_{j}}T_{u_{j,k}},
\end{align*} 
where 
\begin{math}u_{j,k}\in
L_{M^{d}}^{\infty}\end{math}. Then the following are equivalent: 
\begin{itemize}
\item[(1)] $T$ is compact;
\item[(2)] $\Langle\widetilde{T}(z)e,h\Rangle_{\C^{d}} \to 0$ as 
$z \to \partial \Bn$ $e,h\in \C^{d}$ with $\|e\|_{2}=\|h\|_{2}=1$;
\item[(3)] $T_{z}e \to 0$ weakly as $z \to \partial \Bn$;
\item[(4)] $\|T_{z}e\|_{L_{\alpha}^{p}} \to 0$ as 
$z\to \partial \Bn$ for any $p>1$.
\end{itemize}
\end{thm}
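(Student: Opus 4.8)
The plan is to prove the cycle of implications $(1)\Rightarrow(2)\Rightarrow(3)\Rightarrow(4)\Rightarrow(1)$, reserving the product-of-Toeplitz hypothesis for the last step (through Lemma~\ref{lem:only} and Lemma~\ref{lem:raiProp3.3}). The implication $(1)\Rightarrow(2)$ is the soft direction. The normalized kernels satisfy $k_{z}e\to 0$ weakly in $A_{\alpha}^{2}$ as $z\to\partial\Bn$, since $\ip{f}{k_{z}e}_{A_{\alpha}^{2}}=\ip{f(z)}{e}_{\Cd}(1-\abs{z}^{2})^{(n+1+\alpha)/2}\to 0$ (first for $f$ a polynomial, then for all $f$ by density). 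Hence if $T$ is compact then $\norm{T(k_{z}e)}_{A_{\alpha}^{2}}\to 0$, and $\abs{\ip{\widetilde{T}(z)e}{h}_{\Cd}}=\abs{\ip{T(k_{z}e)}{k_{z}h}_{A_{\alpha}^{2}}}\le\norm{T(k_{z}e)}_{A_{\alpha}^{2}}\to 0$.

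For $(2)\Rightarrow(3)$ I would argue by subsequences. Since $U_{z}$ is unitary, $\norm{T_{z}}=\norm{T}$, so $\{T_{z}\}$ is uniformly bounded in operator norm; as $A_{\alpha}^{2}$ is separable, any sequence $z_{n}\to\partial\Bn$ has a subsequence along which $T_{z_{n}}\to S$ in the weak operator topology. The crucial point is that $S=0$: using the identity $\widetilde{T}\circ\varphi_{z}=\widetilde{T_{z}}$ together with $1-\abs{\varphi_{z}(w)}^{2}=(1-\abs{z}^{2})(1-\abs{w}^{2})\abs{1-\overline{z}w}^{-2}$, for each fixed $w$ one has $\varphi_{z_{n}}(w)\to\partial\Bn$, so $\ip{\widetilde{S}(w)e'}{h'}_{\Cd}=\lim_{n}\ip{\widetilde{T}(\varphi_{z_{n}}(w))e'}{h'}_{\Cd}=0$ by $(2)$. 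Thus $\widetilde{S}\equiv 0$, and since the matrix Berezin transform is injective (by polarization and analyticity in the kernel variables), $S=0$. As every subsequence yields the same limit, $T_{z}\to 0$ in the weak operator topology, and pairing $T_{z}e=T_{z}(k_{0}e)$ against $k_{w}h$ gives $(3)$.

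The implication $(3)\Rightarrow(4)$ converts weak convergence into $L_{\alpha}^{p}$ norm convergence by uniform integrability, and here Lemma~\ref{lem:only} enters, providing $M_{p}:=\sup_{z}\norm{T_{z}e}_{A_{\alpha}^{p}}<\infty$ for every $p$. Weak convergence forces $(T_{z}e)(w)\to 0$ pointwise (pair against $K_{w}h$). Fixing $p>1$ and choosing $p'>p$, I would split $\norm{T_{z}e}_{L_{\alpha}^{p}}^{p}=\int_{\abs{w}\le s}+\int_{\abs{w}>s}$: on the tail, Hölder bounds the integral by $M_{p'}^{p}\,v_{\alpha}(\{\abs{w}>s\})^{1-p/p'}$, small uniformly in $z$ once $s$ is near $1$; on $\{\abs{w}\le s\}$, Proposition~\ref{prop:vvsoh2.24} bounds $\norm{(T_{z}e)(w)}_{p}^{p}$ by a constant multiple of $(1-s^{2})^{-(n+1+\alpha)}M_{p}^{p}$, so dominated convergence sends $\int_{\abs{w}\le s}\to 0$ as $z\to\partial\Bn$. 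Letting $s\to 1$ gives $(4)$.

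The substantial direction is $(4)\Rightarrow(1)$, which I expect to be the main obstacle. Conjugating by the unitary reflection $U_{\mathcal{R}}$ (so $T$ is compact iff $U_{\mathcal{R}}TU_{\mathcal{R}}$ is), I would realize $U_{\mathcal{R}}TU_{\mathcal{R}}$ as an integral operator with a matrix kernel and split it at a radius $t\in(0,1)$ into a central piece, whose source variable is confined to $\{\abs{z}\le t\}$, and a boundary piece. The central piece is Hilbert--Schmidt, hence compact, by Lemma~\ref{lem:vvhs}, its kernel being square integrable once the source variable ranges over a relatively compact set. For the boundary piece I would apply the vector-valued Schur test, Lemma~\ref{eqn:vvschur}, with weight $h(w)=\norm{K_{w}}_{A_{\alpha}^{2}}^{\epsilon}$: the two Schur integrals are exactly those estimated in Lemma~\ref{lem:raiProp3.3}, now restricted to $\{\abs{z}>t\}$, hence controlled by $\sup_{\abs{z}>t}\norm{T_{-z}e}_{A_{\alpha}^{q}}$, which tends to $0$ as $t\to 1$ precisely by $(4)$. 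This makes the $L_{\alpha}^{p}$ operator norm of the boundary piece small for $p$ near $1$; combining with the uniform bounds from Lemma~\ref{lem:only} and interpolating (Riesz--Thorin), its $A_{\alpha}^{2}$ norm is also small. Thus $U_{\mathcal{R}}TU_{\mathcal{R}}$ is an operator-norm limit of compact operators on $A_{\alpha}^{2}$, hence compact, and so is $T$. The delicate points I expect to absorb most of the work are the square integrability of the central kernel and transferring the Schur smallness from the range $0<p-1<(n+1)^{-1}$ of Lemma~\ref{lem:raiProp3.3} to the Hilbert space exponent $p=2$.
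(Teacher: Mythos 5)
Your proposal is correct and, for three of the four implications, follows the same architecture as the paper: the soft weak-null-kernel argument for $(1)\Rightarrow(2)$, the split of $\norm{T_{z}e}_{L_{\alpha}^{p}}^{p}$ into a boundary annulus (controlled uniformly via Lemma \ref{lem:only} and H\"older) plus a compact core (where weak convergence upgrades to uniform convergence) for $(3)\Rightarrow(4)$, and, for $(4)\Rightarrow(1)$, conjugation by $U_{\mathcal{R}}$, realization as an integral operator with matrix kernel, Hilbert--Schmidt truncations via Lemma \ref{lem:vvhs}, and Schur's test with weight a power of $\norm{K_{z}}_{A_{\alpha}^{2}}$ fed by Lemma \ref{lem:raiProp3.3}. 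The genuine divergence is in $(2)\Rightarrow(3)$. The paper proves this by a hands-on computation: it expands $k_{\lambda}$ in the monomial basis $\{p_{\beta}\}$, multiplies the identity $\ip{\widetilde{T}(\varphi_{z}(\lambda))e}{h}=\ip{T_{z}(k_{\lambda}e)}{k_{\lambda}h}$ by $\overline{p_{\eta}(\lambda)}/k_{\lambda}(\lambda)$, integrates over $r\Bn$ to isolate $\ip{T_{z}e}{p_{\eta}h}$ up to a tail that is $O(\norm{T}\sum r^{2\abs{\beta}})$, and lets $r\to 0$. You instead use uniform boundedness of $\{T_{z}\}$ (from unitarity of $U_{z}$), extract WOT-convergent subsequences, and kill the limit $S$ by showing $\widetilde{S}\equiv 0$ (via $\widetilde{T_{z}}=\widetilde{T}\circ\varphi_{z}$ and $\varphi_{z_{n}}(w)\to\partial\Bn$ for fixed $w$) and invoking injectivity of the Berezin transform. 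Your route is softer and shorter, and it actually proves the stronger statement that $T_{z}\to 0$ in the weak operator topology, but it imports one ingredient the paper neither states nor proves --- the one-to-one property of the (matrix-valued) Berezin transform --- which you would need to justify (entrywise reduction to the scalar case plus the standard holomorphy/polarization argument); the paper's computation avoids this at the cost of an explicit basis calculation. One further remark on $(4)\Rightarrow(1)$: the difficulty you flag about transferring Schur smallness from exponents $p$ with $0<p-1<(n+1)^{-1}$ up to $p=2$ does not actually arise in the paper's argument, because the $p$ appearing in Lemma \ref{lem:raiProp3.3} is an internal H\"older parameter (its conjugate $q$ is the large exponent in $\sup_{z}\norm{T_{-z}e}_{A_{\alpha}^{q}}$), not the Lebesgue exponent of the Schur test; the test is run directly at exponent $2$ with weight $\norm{K_{z}}_{A_{\alpha}^{2}}^{\epsilon/2}$, so no Riesz--Thorin interpolation is needed.
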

\begin{proof}\begin{math}(1)\implies (2)\end{math}.
First, suppose that \begin{math}T\end{math} is compact. Observe that 
\begin{math}k_{z}e\to 0\end{math} weakly in 
\begin{math}L_{\alpha}^{2}\end{math} as \begin{math}z\to \partial \Bn\end{math}.
Indeed, if \begin{math}f\in L_{\alpha}^{2}\end{math} then 
\begin{align*}\left|\Langle f,\kz e\Rangle_{\ata}\right|&\leq
\sum_{k=1}^{d} \left|\overline{\Langle e,e_{k}\Rangle_{\cd}}\right|
\left|\Langle f,\kz e_{k}\Rangle_{\ata}\right|
\end{align*}
which goes to zero as \begin{math}z\to \partial\Bn\end{math}. (Here we used
the fact that $k_{z}\to 0$ weakly as $z\to\partial\Bn$ in 
$L_{\alpha}^{2}(\Bn;\C)$).
Since \begin{math}T\end{math} is compact, a well-known 
result about compact operators implies that \begin{math}Tk_{z}e\to 
0\end{math} strongly, that is \begin{math}
\|Tk_{z}e\|_{A_{\alpha}^{2}}\to 0 \text{ as }z\to \partial 
\Bn\end{math}. 
Then by the Cauchy-Schwarz inequality, there holds \begin{math}
\left|\Langle\widetilde{T}(z)e,h\Rangle_{\ata}\right|=
\left|\Langle Tk_{z}e,\kz h\Rangle_{A_{\alpha}^{2}}\right|
\leq \|Tk_{z}e\|_{A_{\alpha}^{2}}\to 0\end{math}
as $z\to\partial\Bn$. This gives 
\begin{math}(2)\end{math}.
\end{proof}
\begin{proof}\begin{math}(2)\implies (3)\end{math}.
If \begin{math}\{f_{k}\}\end{math} is a countable orthonormal basis for 
\begin{math}A_{\alpha}^{2}(\Bn;\C)\end{math} and 
\begin{math}\{e_{i}\}\end{math} is an orthonormal basis for 
\begin{math}\C^{d}\end{math}, then 
\begin{math}\{f_{k}e_{i}\}_{k,i}\end{math} is a countable orthonormal basis for 
\begin{math}A_{\alpha}^{2}(\Bn;\Cd)\end{math}.
Let \begin{math}\beta=(\beta_1,\ldots,\beta_n)\end{math} be a multi-index and
define \begin{math}p_{\beta}(\lambda)=
\lambda^{\beta}=(\lambda_{1}^{\beta_{1}},\ldots , 
\lambda_{n}^{\beta_{n}})\end{math}. Since this is an orthonormal basis for 
\begin{math}A_{\alpha}^{2}(\Bn;\Cd)\end{math}
(up to a normalization constant which does not matter to us) 
it is enough to show that \begin{math}\Langle T_{z}e,
p_{\beta}h \Rangle_{A_{\alpha}^{2}} \to 0\end{math} as \begin{math}z\to 
\Bn\end{math} for every 
\begin{math}\beta\in\N^{n}\end{math} and for every 
\begin{math}\|h\|_{2}=1\end{math}. 
We begin by observing that
\begin{math}
\Langle \widetilde{T}(\varphi_{z}(\lambda))e,h\Rangle_{\C^{d}} 
= \Langle\widetilde{T_{z}}(\lambda)e,h\Rangle_{\C^{d}} 
= \Langle T_{z}(k_{\lambda}e),k_{\lambda}h
\Rangle_{A_{\alpha}^{2}} \end{math}.
Expanding \begin{math}k_{\lambda}\end{math} in the orthonormal basis 
\begin{math}\{p_{\beta}\}\end{math}
we next observe that
\begin{align*}k_{\lambda}=
(1-|\lambda|^{2})^{(n+1+\alpha)/2}\sum_{\beta\in\N^{n}}
(\overline{\lambda^{\beta}}p_{\beta})/\gamma_{\beta}.
\end{align*}
Combining these two observations, we deduce that 
\begin{align*}
\Langle (\widetilde{T}\circ\varphi_{z})(\lambda)e,h\Rangle_{\C^{d}} 
&= \Langle T_{z}(k_{\lambda}e),k_{\lambda}h
\Rangle_{A_{\alpha}^{2}}
\\&= \Langle (T_{z})(1-|\lambda|^{2})^{(n+1+\alpha)/2}
    \sum_{\beta\in\N^{n}}
    (\overline{\lambda^{\beta}}p_{\beta}e)/\gamma_{\beta},
    \sum_{\tau\in\N^{n}}(\overline{\lambda^{\tau}}p_{l}h)/
    \gamma_{\tau}\Rangle_{A_{\alpha}^{2}}
\\&= (1-|\lambda|^{2})^{n+1+\alpha}\sum_{\beta,\tau \in \N^{n}}
\Langle T_{z}
    \frac{\overline{\lambda^{\beta}}p_{\beta}e}{\gamma_{\beta}},
    \frac{\overline{\lambda^{\tau}}
    p_{\tau}h}{\gamma_{\tau}}\Rangle_{A_{\alpha}^{2}}
\\&= (1-|\lambda|^{2})^{n+1+\alpha}
    \sum_{\beta,\tau\in\N^{n}}
    \frac{\Langle T_{z}(p_{\beta}e),(p_{\tau}h)
    \Rangle_{A_{\alpha}^{2}}}
    {\gamma_{\beta}{\overline{\gamma_{\tau}}}}
    \overline{\lambda^{\beta}}\lambda^{\tau}.
\end{align*}
Now, we multiply both sides of this equation by 
\begin{math}\frac{\overline{p_{\eta}
(\lambda)}}{k_{\lambda}(\lambda)}
\end{math} and integrate over \begin{math}r\Bn\end{math} 
in the variable \begin{math}\lambda\end{math}:
\begin{align*}
\int_{r\Bn}\Langle \widetilde{T}(\varphi_{z}(\lambda))
\frac{\overline{p_{\eta}(\lambda)}}
{k_{\lambda}(\lambda)}e,h\Rangle_{\C^{d}}\dva (\lambda)
&= \sum_{\beta,\tau\in\N^{n}}\frac{\Langle T_{z}(p_{\beta}e),
(p_{\tau}h)\Rangle_{A_{\alpha}^{2}}}
{\gamma_{\beta}{\overline{\gamma_{\tau}}}}
\int_{r\Bn}\overline{p_{\eta+\beta}(\lambda)}p_{\tau}(\lambda)\dva (\lambda).
\end{align*}
Computing the integral on the right hand side, and re-writing the left hand side
gives:
\begin{align*}
\int_{r\Bn}\overline{p_{\eta}(\lambda)}
(k_{\lambda})^{-1}(\lambda)\Langle
(\widetilde{T}\circ\varphi_{z})(\lambda)e,
h\Rangle_{\C^{d}} \dva (\lambda)
&=r^{2|\eta|+2}\sum_{\beta\in\N^{n}}\frac{\Langle T_{z}(p_{\beta}e),
p_{\eta +\beta}h
\Rangle_{A_{\alpha}^{2}}}
{\gamma_{\beta}}r^{2|\beta|}.
\end{align*}
Now, since \begin{math}(2)\end{math} holds, the left hand side goes to zero as 
\begin{math}z\to\partial \Bn\end{math} for fixed \begin{math}
0< r<\infty\end{math}. Divide both sides by 
\begin{math}r^{2|\eta|+2}\end{math}. This means the right hand side becomes:
\begin{align*}
\frac{\Langle T_{z}e,p_{\eta}h\Rangle_{A_{2}^{\alpha}}}{\gamma_{0}} +
\sum_{\beta\in \N^{n}\setminus 0}\frac{\Langle T_{z}(p_{\beta}e),
p_{\eta+\beta}h\Rangle_{A_{2}^{\alpha}}}
{\gamma_{\beta}}r^{2|\beta|}.
\end{align*}
Thus, we conclude that for fixed \begin{math}r\in (0,1)\end{math} we have 
\begin{align*}
\frac{\Langle T_{z}e,p_{\eta}h\Rangle_{A_{2}^{\alpha}}}{\gamma_{0}} +
\sum_{\beta\in \N^{n}\setminus0}\frac{\Langle T_{z}(p_{\beta}e),
p_{\eta+\beta}h\Rangle_{A_{2}^{\alpha}}}
{\gamma_{\beta}}r^{2|\beta|} \to 0.
\end{align*}
as \begin{math}z\to \partial \Bn\end{math}. Easy estimates also give: 
\begin{align*}
\left|
\sum_{\beta\in \N^{n}\setminus 0}\frac{\Langle T_{z}(p_{\beta}e),
p_{\eta+\beta}h\Rangle_{A_{2}^{\alpha}}}
{\gamma_{\beta}}r^{2|\beta|} \right| \leq
\|T\|\lp\sum_{|\beta|=1}^{n}r^{2|\beta|}+
\sum_{|\beta|>n}r^{2|\beta|}\rp.
\end{align*}
Observe two things. First, the quantity
\begin{align*}
\frac{\Langle T_{z}e,p_{\eta}h\Rangle_{A_{2}^{\alpha}}}{\gamma_{0}}
\end{align*}
is independent of \begin{math}r\end{math}, and second the quantity: 
\begin{align*}
\|T\|_{\mathcal{L}(\lpa)}\lp\sum_{|\beta|=1}^{n}r^{2|\beta|}
+\sum_{|\beta|>n}r^{2|\beta|}
\rp
\end{align*}
is the exact same quantity as in the proof the corresponding theorem 
in \cite{RAI}, where it is proven that for 
\begin{math}r\end{math} small enough, this quantity can be made 
smaller than \begin{math}\epsilon\end{math}
and thus 
\begin{align*}
\limsup_{z\to\partial \Bn}\left|\Langle T_{z}e,p_{\eta}h
\Rangle_{A_{\alpha}^{2}}\right| < \epsilon.
\end{align*}
Since this is true for every \begin{math}\epsilon\end{math} we 
conclude that \begin{math}\left|\Langle 
T_{z}e,p_{\eta}h\Rangle_{A_{\alpha}^{2}}\right|
\to 0\end{math} as 
\begin{math}z\to\partial \Bn\end{math}. Since 
\begin{math}\eta\end{math} and \begin{math}h\end{math} are arbitrary,
our claim has been proven. 
\end{proof}

\begin{proof}(3)\begin{math}\implies (4)\end{math} 
We need to show that if \begin{math}T_{z}e \to 0 \end{math} weakly as 
\begin{math}z \to \partial \Bn\end{math}, 
then 
\begin{math}\|T_{z}e\|_{L_{\alpha}^{p}} \to 0 \end{math} as 
\begin{math} z\to \partial \Bn \end{math} for any \begin{math}
1<p<\infty\end{math}. If \begin{math}r\in
(0,1)\end{math}, there holds:
\begin{align*}
\|T_{z}e\|_{L_{\alpha}^{2}} 
&= \int_{\Bn\setminus r\Bn}\Langle (T_{z}e)(w),
    (T_{z}e)(w)\Rangle_{\C^{d}}\dva (w)
    +\int_{r\Bn}\Langle (T_{z}e)(w),(T_{z}e)(w)\Rangle_{\C^{d}}\dva (w) 
\\&\leq \nu(\Bn\setminus r\Bn)^{1/2}\|T_{z}e\|_{L_{\alpha}^{4}} +
    \int_{r\Bn}\Langle (T_{z}e)(w),(T_{z}e)(w)\Rangle_{\C^{d}}\dva (w). 
\end{align*}
Choose \begin{math}r\end{math} close enough to 1 so that the first term on the 
right hand side smaller than any \begin{math}\delta > 0\end{math} 
(this is possible since 
\begin{math}\|T_{z}e\|_{L_{\alpha}^{4}}\end{math} is bounded independent of 
\begin{math}z\end{math}, by Lemma \ref{lem:only}). 
Now, a sequence of holomorphic
functions which converges weakly to zero also converges to zero in norm on 
compact sets. Thus, the second term on the right hand side goes to zero as 
\begin{math}z\to\partial\Bn\end{math}. This proves our claim for the case 
\begin{math}p=2\end{math}. We now assume that 
\begin{math}p\in (2,\infty)\end{math}. We have that:
\begin{align*}
\|T_{z}e\|_{L_{\alpha}^{2}}\leq\|T_{z}e\|_{L_{\alpha}^{2}}^{1/p}
\|T_{z}e\|_{L_{\alpha}^{2p-p}}^{(p-1)/p}.
\end{align*}
Again, since \begin{math}\|T_{z}\|_{A_{\alpha}^{2p-p}}\end{math} is bounded 
independent of \begin{math}z\end{math} and since 
\begin{math}\|T_{z}\|_{L_{\alpha}^{2}}\end{math} converges to zero as 
\begin{math}z\to\partial \Bn
\end{math} we have proven the claim for the case 
\begin{math}p\in (2,\infty)\end{math}. If \begin{math}p\in (1,2)
\end{math} we have that \begin{math}\|T_{z}e\|_{L_{\alpha}^{p}}\lesssim
\|T_{z}e\|_{L_{\alpha}^{2}}
\end{math}. This completes the proof. \end{proof}
\begin{proof}\begin{math}(4)\implies (1)\end{math}
Suppose that \begin{math}\|T_{z}e\|_{L_{\alpha}^{q}}\to 0 \end{math} as 
\begin{math} z\to\partial \Bn\end{math} for 
every \begin{math}q\in (1,\infty)\end{math}. Since 
\begin{math}U_{\mathcal{R}}\end{math} is bounded and invertible, we have that
\begin{math}T\end{math} is compact if and only if 
\begin{math}U_{\mathcal{R}}TU_{\mathcal{R}}=:T_{\mathcal{R}}
\end{math}
is compact. So, we will show that \begin{math}T_{\mathcal{R}}\end{math} is 
compact. To do this, we will show that \begin{math}T_{\mathcal{R}}\end{math} is 
an integral operator with matrix--valued kernel \begin{math}M(z,w)\end{math} 
given by the following relation 
\begin{align*}
\Langle M(z,w)e_{i},e_{j}\Rangle_{\C^{d}} = 
\Langle (T_{\mathcal{R}}K_{z}e_{j})(w),e_{i}\Rangle_{\C^{d}}.
\end{align*}
We then study radial truncations of the kernel, use Lemma \ref{lem:vvhs} to 
prove the truncations induce compact operators, and then make a limiting 
argument to show that \begin{math}T_{\mathcal{R}}\end{math} is compact.

Let \begin{math}e \in \C^{d}\end{math}. 
First, there holds:

\begin{align*}
(T_{\mathcal{R}}^{*}K_{w}e)(z)
&=\int_{\Bn}(T_{\mathcal{R}}^{*}K_{w}e)(\lambda)
    (\overline{K_{z}(\lambda)})\dva (\lambda)
\\&=\int_{\Bn}\sum_{i=1}^{d}
    \Langle (T_{\mathcal{R}}^{*}K_{w}e)(\lambda)(\overline{K_{z}(\lambda)}      
    ),e_{i}\Rangle_{\C^{d}}e_{i}\dva (\lambda)
\\&=\sum_{i=1}^{d}\int_{\Bn}
    \Langle (T_{\mathcal{R}}^{*}K_{w}e)(\lambda),
    K_{z}(\lambda)(e_{i})\Rangle_{\C^{d}}\dva (\lambda)e_{i}
\\&=\sum_{i=1}^{d}
    \Langle (T_{\mathcal{R}}^{*}K_{w}e),
    K_{z}e_{i}\Rangle_{A_{\alpha}^{2}}e_{i}
\\&=\sum_{i=1}^{d}\Langle K_{w}e,
    (T_{\mathcal{R}}K_{z}e_{i})\Rangle_{A_{\alpha}^{2}}e_{i}
\\&=\sum_{i=1}^{d}
    \overline{\Langle (T_{\mathcal{R}}K_{z}e_{i}),
    K_{w}e\Rangle_{A_{\alpha}^{2}}}e_{i}
\\&=\sum_{i=1}^{d}\overline{
    \Langle (T_{\mathcal{R}}K_{z}e_{i})(w),e\Rangle_{\C^{d}}}e_{i}
\\&=\sum_{i=1}^{d}\Langle e,
    (T_{\mathcal{R}}K_{z}e_{i})(w)\Rangle_{\C^{d}}e_{i}.
\end{align*}
By uniqueness of expansion in orthonormal bases, this implies that:
\begin{align*}
\Langle (T_{\mathcal{R}}^{*}K_{w}e)(z),e_{i}\Rangle_{\C^{d}} &=
\Langle e,(T_{\mathcal{R}}K_{z}e_{i})(w)\Rangle_{\C^{d}}.
\end{align*}
This computation yields:
\begin{align*}
\Langle (T_{\mathcal{R}}f)(w), e\Rangle_{\C^{d}}
&= \Langle T_{\mathcal{R}}f, K_{w}e\Rangle_{A_{\alpha}^{2}}
\\&= \Langle f,(T_{\mathcal{R}}^{*}K_{w}e)\Rangle_{A_{\alpha}^{2}}
\\&=\int_{\Bn}\Langle f(z),(T_{\mathcal{R}}^{*}K_{w}e)(z)
\Rangle_{\C^{d}}\dva (z)
\\&=\int_{\Bn}\sum_{j=1}^{d}\Langle f(z),e_{j}\Rangle_{\C^{d}}
\Langle e_{j},(T_{\mathcal{R}}^{*}K_{w}e)(z)\Rangle_{\C^{d}}\dva (z)
\\&=\int_{\Bn}\sum_{j=1}^{d}\Langle f(z),e_{j}\Rangle_{\C^{d}}
\Langle (T_{\mathcal{R}}K_{z}e_{j})(w),e\Rangle_{\C^{d}}\dva (z).
\end{align*}
Finally, this computation gives:
\begin{align*}
(T_{\mathcal{R}}f)(w)&=\sum_{i=1}^{d}
\Langle (T_{\mathcal{R}}f)(w),e_{i}\Rangle_{\C^{d}}e_{i}
\\&=\sum_{i=1}^{d}\sum_{j=1}^{d}\int_{\Bn}\Langle f(z),e_{j}\Rangle_{\C^{d}}
\Langle (T_{\mathcal{R}}K_{z}e_{j})(w),
e_{i}\Rangle_{\C^{d}}\dva (z)e_{i}.
\end{align*}
This shows us that \begin{math}T_{\mathcal{R}}\end{math} is an integral 
operator, with matrix--valued kernel 
\begin{math}M(z,w)\end{math} given by the following relation 
\begin{align*}
\Langle M(z,w)e_{i},e_{j}\Rangle_{\C^{d}} = 
\Langle (T_{\mathcal{R}}K_{z}e_{j})(w),e_{i}
\Rangle_{\C^{d}}.
\end{align*}
That is, 
\begin{align*}
(T_{\mathcal{R}}f)(w)=\int_{\Bn}M(z,w)f(z)\dva (z).
\end{align*}
We now define the 
truncations of this operator. For \begin{math}t\in(0,1)\end{math}, we define the
operator \begin{math}(T_{\mathcal{R}})_{[t]}\end{math} on \begin{math}
A_{\alpha}^{2}\end{math} by:
\begin{align*}
\lp\lp T_{\mathcal{R}}\rp_{[t]}f\rp(w) = \int_{t\Bn}  M(z,w)f(z)\dva (z).
\end{align*}
So that \begin{math}(T_{\mathcal{R}})_{[t]}\end{math} is an integral operator 
with kernel given by: \begin{math}M_{[t]}(z,w)=1_{t\Bn}(z)M(z,w) \end{math}
Let \begin{math} \|\cdot\|_{F}\end{math} denote the Frobenius norm of a 
\begin{math}d\times d\end{math} matrix. We make the following estimation:
\begin{align*}
\|M(z,w)\|_{F}^{2} 
&= \sum_{i=1}^{d}\sum_{k=1}^{d}|
    \Langle M(z,w)e_{k},e_{i}\Rangle_{\C^{d}}|^{2} 
\\&= \sum_{i=1}^{d}\sum_{k=1}^{d}|
    \Langle (T_{\mathcal{R}}K_{z}e_{i})(w),e_{k}\Rangle_{\C^{d}}|^{2}
\\&\leq d \sum_{i=1}^{d}\|(T_{\mathcal{R}}K_{z}e_{i})(w)
    \|_{2}^{2}.
\end{align*}
This gives for any \begin{math}t\in [0,1)\end{math}: 
\begin{align*}
\int_{\Bn}\int_{\Bn} \|M_{[t]}(z,w)\|_{F}^{2}\dva (z)\dva (w) 
&=\int_{\Bn}\int_{t\Bn} \|M(z,w)\|_{F}^{2}\dva (z)\dva (w) 
\\&\leq \int_{\Bn}\int_{t\Bn}d \sum_{k=1}^{d}\|(T_{R}
K_{z}e_{k})(w)\|_{2}^{2}\dva (z)\dva (w) 
\\&\leq d\sum_{k=1}^{d} \int_{t\Bn} \int_{\Bn}\|
(T_{R}K_{z}e_{k})(w)\|_{2}^{2}\dva (w)\dva (z)
\\&=d\sum_{k=1}^{d}\int_{t\Bn}\|T_{R}(K_{z}
e_{k})\|_{A_{\alpha}^{2}}^{2}\dva (z)
\\&\leq d\sum_{k=1}^{d}\int_{t\Bn}\|T_{R}\|^{2}\|K_{z}e_{k}
\|_{A_{\alpha}^{2}}^{2}\dva (z)
\\&\leq d^{2}\|T_{R}\|^{2}\int_{t\Bn}\|K_{z}
\|_{A_{\alpha}^{2}}^{2}\dva (z)
\\&< \infty.
\end{align*}
Thus, by Lemma \ref{lem:vvhs} \begin{math}\lp T_{\mathcal{R}}\rp_{[t]}
\end{math} 
is Hilbert-Schmidt, 
and therefore compact.
So, to show that \begin{math}T_{\mathcal{R}}\end{math} is compact, 
all we need to prove is that:
\begin{align*}
\lim_{t\to 1^{-}}\|T_{\mathcal{R}}-
(T_{\mathcal{R}})_{[t]}\|_{\mathcal{L}(L^{2}_{\alpha})}=0.
\end{align*}
Note that \begin{math}T_{\mathcal{R}}-(T_{\mathcal{R}})_{[t]}\end{math} 
is an integral operator with kernel given by
\begin{math}1_{\Bn\setminus t\Bn}(z)M(z,w)\end{math}. We will use Schur's Test 
and Lemma 
\ref{lem:raiProp3.3} to estimate the norm of this operator. For Schur's test, 
choose \begin{math}\|K_{z}\|_{A_{\alpha}^{2}}^{\epsilon/2}\end{math}
as our test function. If we choose \begin{math}p \text{ such that } 
0 < (p-1)<(n+1)^{-1}\end{math} and 
\begin{math}q\end{math} as conjugate exponent, and \begin{math}\epsilon\in 
(2(p-1)p^{-1},2(n+1)^{-1}p^{-1})\end{math} then we can apply Lemma 
\ref{lem:raiProp3.3}. Also, let \begin{math}G(t,w)=1_{\Bn\setminus t\Bn}(w)
\end{math}. Then 
\begin{align*}
\int_{\Bn} \|G(t,w)M(z,w)\|\|K_{w}
\|_{A_{\alpha}^{2}}^{\epsilon}\dva (w) 
&\simeq \int_{\Bn} |G(t,w)|\sum_{i,j=1}^{d}\left| 
\Langle M(z,w)e_{i},e_{j}\Rangle_{\C^{d}}\right|\|K_{w}
\|_{A_{\alpha}^{2}}^{\epsilon} \dva (w)
\\&\leq d\sum_{i=1}^{d} \int_{\Bn} |G(t,w)| 
\|(T_{\mathcal{R}}K_{z}e_{i})(w)\|_{\C^{d}}\|K_{w}
\|_{A_{\alpha}^{2}}^{\epsilon} \dva (w)
\\&\leq \sum_{i=1}^{d}\|K_{z}
\|_{A_{\alpha}^{2}}^{\epsilon}\lp\sup_{z\in \Bn}
\|T_{-z}e_{i}\|_{A_{\alpha}^{q}}\rp 
\sup_{z\in \Bn}
|\mathcal{J}_{a,b}(z)|^{1/p}
\\&= \|K_{z}\|_{A_{\alpha}^{2}}^{\epsilon}
\sum_{i=1}^{d}\lp\sup_{z\in \Bn}
\|T_{-z}e_{i}\|_{A_{\alpha}^{q}}\rp \sup_{z\in \Bn}
|\mathcal{J}_{a,b}(z)|^{1/p}
\end{align*}
and
{\allowdisplaybreaks
\begin{align*}
\int_{\Bn} \|G(t,z)M(z,w)\|\|K_{z}
    \|_{A_{\alpha}^{2}}^{\epsilon}\dva (z) 
&\simeq \int_{\Bn} |G(t,z)|\sum_{i,j=1}^{d}\left| 
    \Langle M(z,w)e_{i},e_{j}\Rangle_{\C^{d}}\right|\|K_{z}
    \|_{A_{\alpha}^{2}}^{\epsilon} \dva (z)
\\&= \int_{\Bn} |G(t,z)|\sum_{i,j=1}^{d}\left| 
    \Langle e_{j},\overline{(T_{\mathcal{R}}K_{z}e_{i})(w)}\Rangle_{\C^{d}}
    \right| \|K_{z}
    \|_{A_{\alpha}^{2}}^{\epsilon}\dva (z)
\\&\leq d\sum_{i=1}^{d} \int_{\Bn} |G(t,z)| 
    \|(T_{\mathcal{R}}K_{z}e_{i})(w)\|_{\C^{d}}\|K_{z}
    \|_{A_{\alpha}^{2}}^{\epsilon} \dva (z)
\\&\leq \sum_{i=1}^{d}\|K_{w}
    \|_{A_{\alpha}^{2}}^{\epsilon}\lp\sup_{z\in \Bn}
    \|T_{-z}^{*}e_{i}\|_{A_{\alpha}^{q}}\rp 
    \sup_{w\in \Bn}|\mathcal{J}_{a,b}(w)|^{1/p}
\\&= \|K_{w}\|_{A_{\alpha}^{2}}^{\epsilon}
    \sum_{i=1}^{d}\lp\sup_{z\in \Bn}
    \|T_{-z}^{*}e_{i}\|_{A_{\alpha}^{q}}\rp 
    \sup_{w\in \Bn}|\mathcal{J}_{a,b}(w)|^{1/p}.
\end{align*}}
If we choose \begin{math}a,b\end{math} as in Lemma \ref{lem:raiProp3.3}, we have
that \begin{math}\sup_{w\in \Bn}|\mathcal{J}_{a,b}(w)|^{1/p} < \infty\end{math} 
and our hypotheses on \begin{math}\|T_{-z}e\|_{A_{\alpha}^{2}}\end{math}, an 
application of Schur's test gives that \begin{math}
\lim_{t\to 1^{-1}}\|T_{\mathcal{R}}-(T_{\mathcal{R}})_{[t]}
\|_{\mathcal{L}(L_{\alpha}^{2})}=0\end{math}. 
This gives that 
\begin{math}T_{\mathcal{R}}\end{math} is compact, and therefore, 
\begin{math}T\end{math} is compact. 
\end{proof}

\section{Acknowledgements} 
The author would like to thank Michael Lacey for supporting him as a
research assistant for the Spring semester of 2014 (NSF DMS grant 
\#1265570). He would also like to thank Brett Wick for supporting him
as a research assistant for the Summer semester of 2014 (NSF DMS 
grant \#0955432) and for his discussion of the problem. Finally, he 
would like to thank Philip Benge for his helpful comments and 
suggestions. 

\begin{bibdiv}
\begin{biblist}

\bib{AE}{article}{
   author={Ali, S. Twareque},
   author={Engli{\v{s}}, M.},
   title={Berezin-Toeplitz quantization over matrix domains},
   conference={
      title={Contributions in mathematical physics},
   },
   book={
      publisher={Hindustan Book Agency, New Delhi},
   },
   date={2007},
   pages={1--36},
   review={\MR{2423653 (2010e:47041)}},
}

\bib{AZ}{article}{
   author={Axler, Sheldon},
   author={Zheng, Dechao},
   title={Compact operators via the Berezin transform},
   journal={Indiana Univ. Math. J.},
   volume={47},
   date={1998},
   number={2},
   pages={387--400}
}

\bib{MR3116666}{article}{
   author={{\u{C}}u{\u{c}}kovi{\'c}, {\u{Z}}eljko},
   author={{\c{S}}ahuto{\u{g}}lu, S{\"o}nmez},
   title={Axler-Zheng type theorem on a class of domains in $\Bbb{C}^n$},
   journal={Integral Equations Operator Theory},
   volume={77},
   date={2013},
   number={3},
   pages={397--405}
}

\bib{eng}{article}{
   author={Engli{\v{s}}, Miroslav},
   title={Compact Toeplitz operators via the Berezin transform on bounded
   symmetric domains},
   journal={Integral Equations Operator Theory},
   volume={33},
   date={1999},
   number={4},
   pages={426--455}
}

\bib{eng1}{article}{
   author={Engli{\v{s}}, Miroslav},
   title={Density of algebras generated by Toeplitz operator on Bergman
   spaces},
   journal={Ark. Mat.},
   volume={30},
   date={1992},
   number={2},
   pages={227--243}
}

\bib{mz}{article}{
   author={Miao, Jie},
   author={Zheng, Dechao},
   title={Compact operators on Bergman spaces},
   journal={Integral Equations Operator Theory},
   volume={48},
   date={2004},
   number={1},
   pages={61--79}
}

\bib{MSW}{article}{
   author={Mitkovski, Mishko},
   author={Su{\'a}rez, Daniel},
   author={Wick, Brett D.},
   title={The essential norm of operators on $A^p_\alpha(\Bbb{B}_n)$},
   journal={integral Equations Operator Theory},
   volume={75},
   date={2013},
   number={2},
   pages={197--233},
   issn={0378-620X},
   review={\MR{3008923}},
   doi={10.1007/s00020-012-2025-1},
}

\bib{mw1}{article}{
    author={Mitkovski, Mishko},
    author={Wick, Brett},
    title={The essential norm of operators on $A_{\alpha}(\D^{n})$},
    eprint={http://arxiv.org/abs/1208.5819v3}
}

\bib{mw2}{article}{
    author={Mitkovski, Mishko},
    author={Wick, Brett},
    title={A reproducing kernel thesis for operators on bergman--type 
    function spaces},
    eprint={http://arxiv.org/abs/1212.0507v3}
}

\bib{RW}{article}{
    author={Rahm, Robert},
    author={Wick, Brett},
    title={Essential Norm of Operators on Vector--Valued Bergman Space}
}

\bib{RAI}{article}{
   author={Raimondo, Roberto},
   title={Toeplitz operators on the Bergman space of the unit ball},
   journal={Bull. Austral. Math. Soc.},
   volume={62},
   date={2000},
   number={2},
   pages={273--285}
}

\bib{ru}{book}{
   author={Rudin, Walter},
   title={Function theory in the unit ball of ${\bf C}^{n}$},
   series={Grundlehren der Mathematischen Wissenschaften [Fundamental
   Principles of Mathematical Science]},
   volume={241},
   publisher={Springer-Verlag},
   place={New York},
   date={1980},
   pages={xiii+436}
}

\bib{Sua}{article}{
   author={Su\'{a}rez, Daniel},
   title={The essential norm of operators in the Toeplitz algebra on $A^p
   (\mathbb{B}_n)$},
   journal={indiana Univ. Math. J.},
   volume={56},
   date={2007},
   number={5},
   pages={2185--2232}
}

\bib{Sua2}{article}{
   author={Su{\'a}rez, Daniel},
   title={Approximation and the $n$-Berezin transform of operators on the
   Bergman space},
   journal={J. Reine Angew. Math.},
   volume={581},
   date={2005},
   pages={175--192}
}

\bib{MR2115155}{book}{
   author={Zhu, Kehe},
   title={Spaces of holomorphic functions in the unit ball},
   series={Graduate Texts in Mathematics},
   volume={226},
   publisher={Springer-Verlag},
   place={New York},
   date={2005},
   pages={x+271}
}

\bib{Zhu}{book}{
   author={Zhu, Kehe},
   title={Operator theory in function spaces},
   series={Mathematical Surveys and Monographs},
   volume={138},
   edition={2},
   publisher={American Mathematical Society},
   place={Providence, RI},
   date={2007},
   pages={xvi+348}
}

\end{biblist}
\end{bibdiv}


\end{document}